\documentclass{amsart}
\usepackage{amssymb}
\usepackage{amsfonts}

\setcounter{MaxMatrixCols}{10}

\newtheorem{theorem}{Theorem}
\theoremstyle{plain}

\newtheorem{corollary}{Corollary}

\newtheorem{definition}{Definition}

\newtheorem{proposition}{Proposition}
\newtheorem{remark}{Remark}

\numberwithin{equation}{section}
\input{tcilatex}

\begin{document}
\title{Solving the noncommutative Batalin-Vilkovisky equation.}
\author{Serguei Barannikov\vspace{-0.3cm}}
\address{ENS(Paris), France}
\email{sergueibar@gmail.com, barannik@ens.fr}

\begin{abstract}
Given an odd symmetry acting on an associative algebra, I show that the
summation over arbitrary ribbon graphs gives the construction of the
solutions to the noncommutative Batalin-Vilkovisky equation, including the
equivariant version, introduced in my previous papers\cite{B06a},\cite{B06b}%
. This generalizes the known construction of $A_{\infty }-$algebra via
summation over ribbon trees. These solutions provide naturally the
supersymmetric matrix action functionals, which are the $gl(N)$%
-equivariantly closed differential forms on the matrix spaces, as described
in \cite{B06b}, \cite{B09a}.
\end{abstract}

\maketitle

\section{\protect\bigskip Introduction.\protect\footnotetext[1]{%
Electronic CNRS\ preprint hal-00464794 (17/03/2010)}}

I prove that the summation over arbitrary ribbon graphs with legs produces
explicit solutions to the noncommutative Batalin-Vilkovisky equation, which
I have introduced in \cite{B06a}. This generalizes the construction of $%
A_{\infty }-$structure via summation over trees, see \cite{K}, \cite{M98},%
\cite{H} and references therein.

The noncommutative Batalin-Vilkovisky equation is the equation 
\begin{equation}
\hbar \Delta S+\frac{1}{2}\{S,S\}=0\Leftrightarrow \Delta (\exp \frac{1}{%
\hbar }S)=0  \label{ncBV}
\end{equation}%
for elements of symmetric product of cyclic words 
\begin{equation*}
S=\sum_{i,g}\hbar ^{2g+i-1}S_{i,g},~~S_{i,g}\in Symm^{i}(\oplus
_{j=1}^{\infty }((\Pi B)^{\otimes j})^{\mathbb{Z}/j\mathbb{Z}})^{\vee }
\end{equation*}%
where $B$ is a $\mathbb{Z}/2\mathbb{Z}$-graded vector space with odd scalar
product, $\Delta $ is the odd second order operator, defined via
dissection-gluing of cyclic cochains see section 1.2 in \cite{B06b} or
section 5 in \cite{B06a}. For $B$ with even scalar product the
noncommutative Batalin-Vilkovisky equation and the operator $\Delta $ are
defined on elements of \emph{exterior} product of cyclic words $Symm(\oplus
_{j=1}^{\infty }\Pi ((\Pi B)^{\otimes j})^{\mathbb{Z}/j\mathbb{Z}})^{\vee }$.

I've associated in \cite{B06b} to any solution $S$ to equation (\ref{ncBV})
the $A-$infinity $gl(N)-$equivariant matrix integral of the form, in the odd
scalar product case, 
\begin{equation*}
\int_{\gamma }\exp \frac{1}{\hbar }(Tr(m_{A_{\infty }})+\sum_{2g+i>1}\hbar
^{2g+i-1}S_{i,g}(X)+\frac{1}{2}\left\langle [\Xi ,X],X\right\rangle )w~dX
\end{equation*}%
where $m_{A_{\infty }}=S_{1,0}$ is the term of $S$ of the lowest order in $%
\hbar $, which is the cyclic $A_{\infty }-$algebra tensor. The term $\exp 
\frac{1}{\hbar }(\sum_{2g+i>1}\hbar ^{2g+i-1}S_{i,g}+\frac{1}{2}\left\langle
[\Xi ,X],X\right\rangle )$ can be understood as a multitrace and equivariant
completion of $\exp (\frac{1}{\hbar }Tr(m_{A_{\infty }}))$ to a closed $%
gl(N)-$equivariant differential form, see \cite{B09a}. This is the
integration framework which I've proposed to associate with the equation $%
\{m_{A_{\infty }},m_{A_{\infty }}\}=0$.

These integrals can be understood as the generalisation of the matrix Airy
integral to arbitrary higher dimensions and, simulteneously, as the
noncommutative generalisation of periods of Calabi-Yau manifolds. They have
many remarkable properties, see \cite{B06b}, \cite{B09a}, \cite{B09c}. For
example, I've proven in \cite{B06b} that their asymptotic expansion is given
by the pairing between the characteristic classes $c_{S}\in H_{\ast }(%
\overline{\mathcal{M}}_{g,n})$ of the quantum $A_{\infty }-$algebra and the
classes $c_{\Lambda }\in H^{\ast }(\overline{\mathcal{M}}_{g,n})$ associated
with the odd part of the equivariant $gl(N)-$action. The solutions to the
equation (\ref{ncBV}) are the lagrangians for these matrix integrals and
this is one of the principal reasons why it is important to develop the
instrument, which constructs explicitely and classifies such solutions.

The universal look of the noncommutative Batalin-Vilkovisky equation and the
property that it leads to the remarkable integration theory, suggests that
the $A_{\infty }-$enhancement, widely used in the non-commutative derived
algebraic geometry, should in many interesting cases be considered as the
first order approximation to the full structure described by the solution to
the noncommutative Batalin-Vilkovisky equation in the space of symmetric or
exterieur powers of cyclic tensors. I explain below how to write down such
full structure in this context, see the corollary \ref{corwell}.

I work from the beginning in the equivariant extension of the noncommutative
Batalin-Vilkovisky formalism \cite{B09a}, \cite{B06b}. In particular my
results in this paper are valid in the framework of algebraic structures
with supersymmetry. My basic setting throughout the paper is an odd linear
operator $I$, which acts as a symmetry of associative algebra $A$, or more
general algebraic structure, and whose square is, in general, nonzero $%
I^{2}\neq 0$. The interesting applications include both the cases with $%
I^{2}=0$ and with $I^{2}\neq 0$. An important example from \cite{B06b},
deeply related with tauthological classes on the moduli space of Riemann
sufaces, is related with the odd differentiation $[\Xi ,\cdot ]$ acting on
the Bernstein-Leites odd general matrix algebra $q(N)$ with its odd trace,
where $\Xi $ is an odd matrix from $q(N)$. In this case $I^{2}=[\Xi
^{2},\cdot ]$ and $I^{2}\neq 0$.

Briefly, my construction is a sum, over ribbon graphs with legs, of the
tensors $W_{\Gamma }$, defined on symmetric/exterior products of cyclic
words, and given by the contraction defined by the ribbon graph, where the
elements of $\Pi B$ are attached to the legs of the ribbon graph, the
structure constants of the algebra $A$ (associative, $A_{\infty }$) are
attached to the vertices and the propagator, which is the inverse to the
scalar product modified by homotopy inverse to $I$, is attached to the
edges, see (\ref{wg}). I give also the generalizations for summation over
stable ribbon graphs, whose relation with the noncommutative
Batalin-Vilkovisky equation was described in \cite{B06a}.

The construction here is very closely related with my construction from \cite%
{B06a}. In \cite{B06a} I've constructed the homology class in the stable
ribbon graph complex from any solution of the noncommutative
Batalin-Vilkovisky equation. The construction there associates the number to
any stable ribbon graph with no legs. It is the contraction, with the
products of certain multitrace tensors $m_{i,g}$  attached to the vertices,
and the inverse to the scalar product attached to the edges. Here the
ingredients in the construction giving solution to the Batalin-Vilkovisky
equation are similar. Except that here I relax the condition on the linear
term: $d^{2}\neq 0$. To avoid the confusion this linear term is denoted by $I
$. In addition I take a  self-adjoint operator $H$ such that $Id-[I,H]=P$ is
idempotent, $[I^{2},H]=0$, notice that $H^{2}\neq 0$ in general. In a sense
it is a homotopy inverse to $I$ on the subspace $Id-P$. I use $H$ to modify
the scalar product associate with edges. The outcome of the construction is
a solution, on the image of the idempotent $P$, to the noncommutative
Batalin-Vilkovisky equation (equivariant if $I^{2}$ is nonzero on the image
of $P$).

The similarity of the construction in this paper and the construction from
the previous paper \cite{B06a} is not accidental. In fact, the
quasiisomorphism $PA\hookrightarrow A$ induces the quasi-isomorphism from
the dg-Lie algebra $\underline{Mor}(\mathbb{M}_{\mathcal{D}^{\vee }}\mathcal{%
P}^{dual},\mathcal{E}[A])$ to $\underline{Mor}(\mathbb{M}_{\mathcal{D}^{\vee
}}\mathcal{P}^{dual},\mathcal{E}[PA]),$ see \cite{B06a}, section 5.1. And
the construction described below is the result of the action of this
quasi-isomorphism on the solution to the noncommutative Batalin-Vilkovisky
equation. On the other hand such solution is the same, by the theorem 1 from 
\cite{B06a}, as the action of the stable ribbon graphs complex, whose part
on graphs with no legs gives the homology classes construction from \cite%
{B06a}.

It is important to stress that in order that the summation over ribbon
graphs, with the $A_{\infty }-$tensors attached to vertices, gives the
solution to the noncommutative Batalin-Vilkovisky equation, the following
extra condition must be imposed on the cyclic $A_{\infty }-$algebra 
\begin{equation*}
\Delta m_{A_{\infty }}=0.
\end{equation*}%
One significant case when this condition is automatically satisfied is the
case of the $\mathbb{Z}/2\mathbb{Z}$-graded associative algebra. Another
possibility to construct the solution on the subspace $PA\subset A$ is to
extend the $A_{\infty }-$structure on $A$ to a solution to the
noncommutative Batalin--Vilkovisky equation in the initial space $A$ and
then take the sum over \emph{stable} ribbon graphs. I describe the case when
such extension is possible in the corollary\nolinebreak\ \ref{corwell}.

Here is the brief description of the content of the paper. In section \ref%
{sAss} I describe the summation over ribbon graphs starting from the input
data given by the odd symmetry $I$ acting on the $\mathbb{Z}/2\mathbb{Z}$%
-graded associative algebra $A$, $\dim _{k}A<\infty $ , with odd or even
scalar product. This is the main construction of the paper. Next I consider
the case of cyclic $A_{\infty }-$algebra. 

The general case of associative algebra with no scalar product, and also $%
A_{\infty }-$algebra with no scalar product, is then reduced to the case of
the algebra with scalar product by putting $\widetilde{A}=A\oplus (\Pi
A)^{\vee }$ with odd scalar product given by natural odd pairing between $A$
and $(\Pi A)^{\vee }$, or by putting $\widetilde{A}=A\oplus A^{\vee }$ with
even scalar product given by natural even pairing between $A$ and $A^{\vee }$%
. 

In the section \ref{sectmodop} I give the generalization to the case of
summation over graphs with arbitrary $\mathbb{S}_{n}-$modules, endowed with
some contraction operations, which are attached to the vertices. The
supersymmetry $I$ in this case is acting on an algebra  over $\mathcal{FP}$,
the Feynman transform of an  arbitrary twisted modular operad $\mathcal{P}$,
and the summation over $\mathcal{P}-$marked graphs  gives the solution  to
the $\mathcal{P}-$type Batalin-Vilkovisky equation introduced in   \cite%
{B06a}. 

In the last section \ref{sectionIntegr} I outline the applications of the
construction by recalling the relations from \cite{B06b}, \cite{B09a} of the
noncommutative Batalin-Vilkovisky formalism with equivariant matrix
integration, in particular the correspondence between solutions to the
noncommutative Batalin-Vilkovisky equation and equivariantly closed
differential forms on $\left( gl(N|N)\otimes \Pi V\right) _{0}$. 

\emph{Notations}. I work in the tensor category of super vector spaces, over
an algebraically closed field $k$, $char(k)=0$. Let $V_{0}\oplus V_{1}$ be a 
$\mathbb{Z}/2\mathbb{Z}$-graded vector space. I denote by $\overline{\alpha }
$ the parity of an element $\alpha $ and by $\Pi V$ the super vector space
with inversed parity. Element $(a_{1}\otimes a_{2}\otimes \ldots \otimes
a_{n})$ of $A^{\otimes n}$ is denoted by $(a_{1},a_{2},\ldots ,a_{n})$. I
denote by $V^{\vee }$ the dual vector space $\limfunc{Hom}(V,k)$. For a
module $U$ over a finite group $G$ I denote via $U^{G}$ the subspace of
invariants: $\{\forall g\in G:gu=u|u\in U\}$. A graph $\Gamma $ is a triple $%
(Flag(\Gamma ),\lambda ,\sigma )$, where $Flag(\Gamma )$ is a finite set,
whose elements are called flags, $\lambda $ is a partition of $Flag(\Gamma )$%
, and $\sigma $ is an involution acting on $Flag(\Gamma )$. By partition
here one understands a disjoint decomposition into unordered subsets. These
subsets are the vertices of the graph. The set of vertices is denoted by $%
Vert(\Gamma )$. The subset of $Flag(\Gamma )$ corresponding to vertex $v$ is
denoted by $Flag(v)$. The cardinality of $Flag(v)$ is called the valence of $%
v$ and is denoted $n(v)$. The edges of the graph are the pairs of flags
forming a non-trvial two-cycle of the involution $\sigma $. The set of edges
is denoted $Edge(\Gamma )$. The legs of the graph are the fixed elements of
the involution $\sigma $. The set of legs is denoted $Leg(\Gamma )$. The
number of legs is denoted $n(\Gamma )$. The cardinality of a finite set $X$
is denoted by $|X|$. Throughout the paper, unless it is stated explicitely
otherwise, $(-1)^{\epsilon }$ in the formulas denotes the standard Koszul
sign, which can be worked out by counting $(-1)^{\overline{a}\overline{b}}$%
every time the objects $a$ and $b$ are interchanged to obtain the given
formula.

\section{\protect\bigskip\ Associative algebra with odd differentiation. 
\label{sAss}}

I consider in this section a $\mathbb{Z}/2\mathbb{Z}-$graded associative
algebra $A$, $\dim _{k}A<\infty $ , with multiplication denoted by $%
m_{2}:A^{\otimes 2}\rightarrow A$ and an \emph{odd differentiation} $%
I:A\rightarrow \Pi A$%
\begin{equation*}
Im_{2}(a,b)=m_{2}(Ia,b)+(-1)^{\overline{a}}m_{2}(a,Ib),
\end{equation*}%
in particular, if $I^{2}=0$ then this is a d$(\mathbb{Z}/2\mathbb{Z)}$%
g-algebra.

\subsection{\protect\bigskip Odd scalar product.}

I assume that the algebra $A$,  is cyclic with odd scalar product 
\begin{equation*}
\beta :A^{\otimes 2}\rightarrow \Pi A,
\end{equation*}%
so that the three tensor 
\begin{eqnarray*}
m &\in &((\Pi A)^{\otimes 3})^{\vee } \\
m(\pi a,\pi b,\pi c) &=&(-1)^{\overline{b}}\beta (m_{2}(a,b),c)
\end{eqnarray*}%
is cyclically invariant 
\begin{equation*}
m(\pi a,\pi b,\pi c)=(-1)^{(c+1)(a+b)}m(\pi c,\pi a,\pi b)
\end{equation*}%
and that $\beta $ is preserved by $I$:%
\begin{equation*}
\beta (Ia,b)+(-1)^{\overline{a}}\beta (a,Ib)=0.
\end{equation*}%
The modification for the variant with an even scalar product are described
below.

Below I consider also the variant for general d$(\mathbb{Z}/2\mathbb{Z})$%
g-algebra without scalar product. It is reduced to the case with even/odd
scalar product by putting $\widetilde{A}=A\oplus A^{\vee }$, or $\widetilde{A%
}=A\oplus \Pi A^{\vee }$ with their natural scalar products.

I have 
\begin{equation}
m(Ia,b,c)+(-1)^{\overline{a}}m(a,Ib,c)+(-1)^{\overline{a}+\overline{b}%
}m(a,b,Ic)=0  \label{Im}
\end{equation}%
which reflect the Leibnitz rule for the differentiation $I$. Denote by $%
\beta ^{\vee }\in (\Pi A)^{\otimes 2}$ the tensor of the scalar product on
the dual vector space, then for any $a,b,c,d\in A$, 
\begin{equation}
\left\langle m(\pi a,\pi b,\cdot )m(\cdot ,\pi c,\pi d),\beta ^{\vee
}\right\rangle =(-1)^{\varepsilon }\left\langle m(\pi d,\pi a,\cdot )m(\cdot
,\pi b,\pi c),\beta ^{\vee }\right\rangle  \label{mm}
\end{equation}%
which is the associativity of the multiplication $m$.

\bigskip Let $H$ be an odd selfadjoint operator%
\begin{equation*}
H:A\rightarrow \Pi A,~~~H^{\vee }=H
\end{equation*}%
such that 
\begin{equation}
Id-[I,H]=P  \label{Idp}
\end{equation}%
is an idempotent operator $P:A\rightarrow A,$%
\begin{equation*}
~P^{2}=P.
\end{equation*}%
I assume also that $H$ commutes with $I^{2}$, this is automatic if $I^{2}=0$%
. Such $H$ can be always be found for example by considering the kernel $%
\ker I^{2}=\{x|I^{2}x=0\}$, on which $H$ is a homotopy on the complement to
a space representing the cohomology of $I|_{\ker I^{2}}$, and the orthogonal
complement $(\ker I^{2})^{\intercal }$ of $\ker I^{2}$ on which $I^{2}$ is
invertible and on which $H$ can be taken for example to be $\frac{1}{2}%
I|_{(\ker I^{2})^{\intercal }}^{-1}$. Notice that in general $H^{2}\neq 0$.
I denote by $B$ the subspace which is the image of the idempotent $P$.

Let $\Gamma $ be a tri-valent ribbon graph, i.e. the trivalent graph with
fixed cyclic orders on the sets of the three flags attached to every vertex.
Let $\Sigma _{\Gamma }$ be the corresponding oriented two-dimensional
surface. Then I put:

\begin{itemize}
\item \bigskip the three-tensors 
\begin{equation*}
m^{v}\in ((\Pi A)^{\otimes Flag(v)})^{\vee }
\end{equation*}%
on every vertex $v$

\item the two tensors 
\begin{gather*}
\beta _{H}^{\vee ,e}\in (\Pi A)^{\otimes \{f,f^{\prime }\}}, \\
\beta _{H}^{\vee ,e}=\beta ^{\vee }(H^{\vee }u_{f},v_{f^{\prime }})=(-1)^{%
\overline{u_{f}}~\overline{v_{f^{\prime }}}}\beta ^{\vee }(H^{\vee
}v_{f^{\prime }},u_{f})
\end{gather*}%
for any interieur edge $e=(ff^{\prime })$

\item element $a_{l}\in \Pi B$, for any leg $l\in Leg(\Gamma ),$ this gives
a partition of the set of elements $\{a_{l}\}_{l\in Leg(\Gamma )}$ to the
subsets corresponding to the components of the boundary $\partial \Sigma
_{\Gamma }$ and the cyclic orders on these subsets.
\end{itemize}

Notice that both $m^{v}$ and $\beta _{H}^{\vee ,e}$ are even elements, so
that the products 
\begin{equation*}
\tbigotimes_{v\in Vert(\Gamma )}m^{v}\in ((\Pi A)^{\otimes Flag(\Gamma
)})^{\vee }
\end{equation*}
and 
\begin{equation*}
\tbigotimes_{e\in Edge(\Gamma )}\beta _{H}^{\vee ,e}\in (\Pi A)^{\otimes
Flag(\Gamma )\setminus Leg(\Gamma )}
\end{equation*}
are canonically defined.

\begin{definition}
I define the tensor $W_{\Gamma }$ as the contraction 
\begin{equation}
W_{\Gamma }(\tbigotimes_{l\in Leg(\Gamma )}a_{l})=\left\langle
\tbigotimes_{v\in Vert(\Gamma )}m^{v},\left( \tbigotimes_{e\in Edge(\Gamma
)}\beta _{H}^{\vee ,e}\right) \tbigotimes_{l\in Leg(\Gamma
)}a_{l}\right\rangle  \label{wg}
\end{equation}
\end{definition}

Notice that $W_{\Gamma }$ is cyclically invariant on every subset of $%
\{a_{l}\}_{l\in Leg(\Gamma )}$ corresponging to a component of the boundary
of $\Sigma _{\Gamma }$. Moreover the cyclic orders on flags at vertices
induce the orientation on the ribbon graph $\Gamma $, whose detailed
analysis, see e.g.\cite{B09b}, \cite{B06a} or section \ref{sectmodop} below,
shows that $W_{\Gamma }$ belongs to the symmetric product 
\begin{equation*}
W_{\Gamma }\in Symm(\oplus _{j=1}^{\infty }(\Pi B^{\otimes j})^{\mathbb{Z}/j%
\mathbb{Z}})^{\vee }
\end{equation*}%
Let $\chi (\Sigma _{\Gamma })$ denotes the genus of $\Sigma _{\Gamma }$, 
\begin{equation*}
\chi (\Sigma _{\Gamma })=2-2g(\Sigma _{\Gamma })-i(\Sigma _{\Gamma }),
\end{equation*}%
where $g(\Sigma _{\Gamma })$, $i(\Sigma _{\Gamma })$ are the genus and the
number of boundary components of $\Sigma _{\Gamma }$. I put 
\begin{equation}
S=\tsum_{\{\Gamma \}}\hbar ^{1-\chi (\Sigma _{\Gamma })}W_{\Gamma }
\label{S}
\end{equation}%
where the sum is over isomorphism classes of connected trivalent graphs with
nonempty subsets of legs on every boundary component of $\Sigma _{\Gamma }$.
One can include the graphs with empty subsets of legs on boundary components
by adding the constant term to the Batalin-Vilkovisky operator $\Delta $, I
leave the details to the interested reader.

\begin{proposition}
The number of such ribbon graphs with fixed $\chi (\Sigma _{\Gamma })$ and
fixed number of $n(\Gamma )$ of legs is finite.
\end{proposition}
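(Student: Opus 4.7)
The plan is to reduce the claim to a finite-counting statement by showing that the numerical invariants $|Vert(\Gamma)|$ and $|Edge(\Gamma)|$ are completely determined by $\chi(\Sigma_{\Gamma})$ and $n(\Gamma)$. Once this is established, an isomorphism class of a trivalent ribbon graph is encoded by a finite amount of combinatorial data on a set of bounded size, and the finiteness is immediate.

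First I would record the two balancing identities. Since every vertex is trivalent, counting flags two ways gives
\begin{equation*}
3|Vert(\Gamma)| \;=\; 2|Edge(\Gamma)| + n(\Gamma).
\end{equation*}
Next, since $\Sigma_{\Gamma}$ deformation retracts onto the underlying graph (where each leg contributes a $1$-cell with one free end), one has
\begin{equation*}
\chi(\Sigma_{\Gamma}) \;=\; |Vert(\Gamma)| - |Edge(\Gamma)|.
\end{equation*}
Solving these two linear equations for the unknowns $|Vert(\Gamma)|$ and $|Edge(\Gamma)|$ yields
\begin{equation*}
|Vert(\Gamma)| \;=\; n(\Gamma) - 2\chi(\Sigma_{\Gamma}), \qquad |Edge(\Gamma)| \;=\; n(\Gamma) - 3\chi(\Sigma_{\Gamma}),
\end{equation*}
so both quantities, and hence $|Flag(\Gamma)| = 3|Vert(\Gamma)|$, are determined by the two fixed invariants.

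To finish, I would invoke the definition of a ribbon graph recalled in the Notations: it is a finite set $Flag(\Gamma)$ together with a partition $\lambda$, an involution $\sigma$, and a cyclic order on each block of $\lambda$. With $|Flag(\Gamma)|$ bounded by the formula above, there are only finitely many choices of $\lambda$, $\sigma$, and cyclic orders, hence finitely many isomorphism classes of trivalent ribbon graphs with the prescribed $\chi(\Sigma_{\Gamma})$ and $n(\Gamma)$. The connectedness requirement and the requirement that every boundary component of $\Sigma_{\Gamma}$ carry at least one leg only cut down this finite set further, so the proposition follows. There is no real obstacle here; the only point requiring a moment's care is the Euler characteristic formula $V - E = 2 - 2g - i$, which is standard once one remembers that $\Sigma_{\Gamma}$ retracts onto $\Gamma$.
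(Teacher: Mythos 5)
Your proof is correct and follows essentially the same route as the paper's: the same two counting identities (flag count at trivalent vertices and $\chi(\Sigma_\Gamma)=|Vert(\Gamma)|-|Edge(\Gamma)|$) solved to give $|Vert(\Gamma)|=n(\Gamma)-2\chi(\Sigma_\Gamma)$ and $|Edge(\Gamma)|=n(\Gamma)-3\chi(\Sigma_\Gamma)$. Your closing paragraph merely makes explicit the final step that the paper leaves implicit, namely that bounding $|Flag(\Gamma)|$ bounds the number of choices of partition, involution, and cyclic orders.
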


\begin{proof}
This is a standard lemma, whose proof I include here for convenience of the
reader. The number of flags gives%
\begin{equation*}
n(\Gamma )+2|Edge(\Gamma )|=3|Vert(\Gamma )|
\end{equation*}%
Also 
\begin{equation*}
\chi (\Sigma _{\Gamma })=|Vert(\Gamma )|-|Edge(\Gamma )|
\end{equation*}%
since $\Sigma _{\Gamma }$ is homotopic to the geometric realisation of $%
\Gamma $. It follows that 
\begin{equation*}
|Edge(\Gamma )|=n(\Gamma )-3\chi (\Sigma _{\Gamma })
\end{equation*}%
and 
\begin{equation*}
|Vert(\Gamma )|=n(\Gamma )-2\chi (\Sigma _{\Gamma })
\end{equation*}%
and hence the number of such graphs is finite.
\end{proof}

\begin{theorem}
\label{th1}The sum over ribbon graphs $S$ defined in (\ref{S}) satisfy the
equivariant noncommutative Batalin-Vilkovisky equation: 
\begin{equation}
\hbar \Delta S+\frac{1}{2}\{S,S\}+I^{\vee }S=0,\,\,  \label{eqBV}
\end{equation}%
in particular if $I|_{B}$ is zero then $S$ is the solution of the
non-commutative Batalin-Vilkovisky equation from \cite{B06a},\cite{B06b} 
\begin{equation*}
\hbar \Delta S+\frac{1}{2}\{S,S\}=0
\end{equation*}%
If $I|_{B}\neq 0$, but $I^{2}|_{B}=0$, then $S+$ $S_{0,2}$ is also a
solution to the non-commutative Batalin-Vilkovisky equation from \cite{B06a},%
\cite{B06b}, where $S_{0,2}=(-1)^{\epsilon }\beta (I\cdot ,\cdot )|_{B}$ is
the quadratic term corresponding to the differential $I|_{B}$.
\end{theorem}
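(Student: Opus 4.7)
The plan is to interpret each of the three terms in (\ref{eqBV}) as a sum over decorated trivalent ribbon graphs and produce a cancellation using the Leibniz rule (\ref{Im}), the $I$-invariance of $\beta$, and the defining identity $\mathrm{Id}-[I,H]=P$.

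First, I would unpack $\hbar \Delta S + \frac{1}{2}\{S,S\}$ combinatorially. Since $\Delta$ pairs two legs of a single cyclic tensor via the propagator $\beta^{\vee}$ and $\{\cdot,\cdot\}$ pairs a leg of one cyclic tensor with a leg of another, the combination takes the form
$$
\hbar \Delta S + \frac{1}{2}\{S,S\}=\sum_{(\Gamma,e_{*})}\hbar^{1-\chi(\Sigma_{\Gamma})}\,\widetilde{W}_{\Gamma}^{(e_{*})},
$$
where $(\Gamma,e_{*})$ ranges over isomorphism classes of connected trivalent ribbon graphs with a distinguished edge $e_{*}$, and $\widetilde{W}_{\Gamma}^{(e_{*})}$ is computed as in (\ref{wg}) except that the marked edge $e_{*}$ carries the unmodified propagator $\beta^{\vee}$ rather than $\beta_{H}^{\vee}$. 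The power of $\hbar$ is correct because cutting one edge of a connected ribbon graph decreases $\chi$ by one, and the factor $\frac{1}{2}$ of the bracket is absorbed by the symmetry of swapping the glued legs of the marked edge.

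Next, I would compute $I^{\vee}S$ by propagating $I$ from the legs into the contraction. Because $I$ is a derivation of $m$ (equation (\ref{Im})) and preserves $\beta$, the total $I$-action at each trivalent vertex vanishes, so only edge contributions survive:
$$
I^{\vee}W_{\Gamma}=\pm\sum_{e\in Edge(\Gamma)}W_{\Gamma}^{[I,e]},
$$
with $W_{\Gamma}^{[I,e]}$ replacing the propagator $\beta_{H}^{\vee}$ on $e$ by $(I\otimes 1+1\otimes I)\beta_{H}^{\vee}$. The $I$-invariance of $\beta$ combined with $[I,H]=\mathrm{Id}-P$ evaluates this insertion to $\pm(\beta^{\vee}-\beta^{\vee}\circ(P\otimes 1))$. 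The first term, summed over $(\Gamma,e)$, reproduces, up to the appropriate Koszul sign, the marked-edge sum from the previous paragraph, producing the desired cancellation with $\hbar\Delta S+\frac{1}{2}\{S,S\}$. The second term, involving a $P$-projection at one flag of the marked edge, is handled by a second application of $\mathrm{Id}-[I,H]=P$ together with the symmetry of $\beta^{\vee}$, resulting in a telescoping sum that vanishes.

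The main technical obstacle will be the careful bookkeeping of Koszul signs --- in the Leibniz propagation $I^{\vee}W_{\Gamma}=\pm\sum_{e}W_{\Gamma}^{[I,e]}$, in the identity $(I\otimes 1+1\otimes I)\beta_{H}^{\vee}=\pm\beta^{\vee}([I,H]\otimes 1)$, and in the vanishing of the $\beta^{\vee}\circ(P\otimes 1)$-term --- all organized by a coherent orientation of each ribbon graph. Once (\ref{eqBV}) is established, the two specializations follow immediately: if $I|_{B}=0$ then $I^{\vee}S=0$ and (\ref{eqBV}) reduces to the standard ncBV equation of \cite{B06a},\cite{B06b}; if $I^{2}|_{B}=0$ then the quadratic $S_{0,2}=(-1)^{\epsilon}\beta(I\cdot,\cdot)|_{B}$ is a Maurer--Cartan element whose bracket $\{S_{0,2},-\}$ reproduces $I^{\vee}$ and satisfies $\{S_{0,2},S_{0,2}\}=0$, so the shifted sum $S+S_{0,2}$ solves the standard ncBV equation.
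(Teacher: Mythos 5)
Your overall architecture (marked-edge decomposition of $\Delta$ and $\{\cdot,\cdot\}$, Leibniz propagation of $I$ to the edges, and the identity $\mathrm{Id}-[I,H]=P$ inserted on the propagator) is the right one, but the cancellation pattern you set up is crossed, and as a result the one genuinely essential input --- the associativity of $m_{2}$ --- never enters your argument. That cannot be right: the statement fails for a non-associative product, so any correct proof must use (\ref{mm}) somewhere. Concretely, your first step misidentifies the propagator produced by $\Delta$ and $\{\cdot,\cdot\}$. The legs of $W_{\Gamma}$ carry elements of $\Pi B$ with $B=\mathrm{im}\,P$, and the BV operator and bracket glue two legs using the inverse of $\beta|_{B}$; viewed as a tensor on $A$, this inserted kernel is $\beta^{\vee}(P^{\vee}u_{f},P^{\vee}v_{f'})=\beta^{\vee}(P^{\vee}u_{f},v_{f'})$ (using $P^{2}=P$ and self-adjointness), i.e.\ the $P$-modified propagator $W_{\Gamma,e}^{P}$, \emph{not} the bare $\beta^{\vee}$ as you assert. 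The correct bookkeeping is therefore: $\hbar\Delta S+\frac{1}{2}\{S,S\}=\sum_{\Gamma,e}\hbar^{1-\chi}W_{\Gamma,e}^{P}$, while $I^{\vee}S=\sum_{\Gamma,e}\hbar^{1-\chi}W_{\Gamma,e}^{[I,H]}$ (your Leibniz computation of this part is fine), and since $W_{\Gamma,e}^{P}=W_{\Gamma,e}^{Id}-W_{\Gamma,e}^{[I,H]}$ the whole left-hand side of (\ref{eqBV}) collapses to $\sum_{\Gamma,e}\hbar^{1-\chi}W_{\Gamma,e}^{Id}$.

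The proof then stands or falls on showing that this residual sum of bare-$\beta^{\vee}$ insertions vanishes, and this is exactly where associativity is used: by (\ref{mm}), replacing $\langle m(\pi a,\pi b,\cdot)m(\cdot,\pi c,\pi d),\beta^{\vee}\rangle$ at the marked edge by $(-1)^{\varepsilon}\langle m(\pi d,\pi a,\cdot)m(\cdot,\pi b,\pi c),\beta^{\vee}\rangle$ implements the edge-flip (Whitehead move) on trivalent ribbon graphs, which preserves $\Sigma_{\Gamma}$ and the distribution of legs over boundary components, so the terms cancel in pairs when summed over all graphs with a fixed surface. Your proposal instead tries to cancel the bare-$\beta^{\vee}$ part of $I^{\vee}S$ against $\hbar\Delta S+\frac{1}{2}\{S,S\}$ and to dispose of the $\beta^{\vee}\circ(P\otimes 1)$ part by an unspecified ``telescoping''; that telescoping claim is not substantiated and is in fact false --- the $P$-part is precisely (up to sign) $\hbar\Delta S+\frac{1}{2}\{S,S\}$ and does not vanish on its own. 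So the gap is twofold: the wrong propagator assigned to the $\Delta$/bracket terms, and the complete absence of the associativity/edge-flip cancellation $\sum_{\{\Gamma\},\Sigma_{\Gamma}=\Sigma,\,e}W_{\Gamma,e}^{Id}=0$. Your final paragraph on the two specializations ($I|_{B}=0$, and $I^{2}|_{B}=0$ with the quadratic correction $S_{0,2}$) is correct as stated.
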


\begin{proof}
The proof is straightforward. For a trivalent graph $\Gamma $ and an
internal edge $e\in Edge(\Gamma )$ consider the three tensors 
\begin{equation*}
W_{\Gamma ,e}^{[I,H]},W_{\Gamma ,e}^{Id},W_{\Gamma ,e}^{P}\in Symm(\oplus
_{j=1}^{\infty }((\Pi B)^{\otimes j})^{\mathbb{Z}/j\mathbb{Z}})^{\vee }
\end{equation*}%
which are defined by the same contraction as $W_{\Gamma }$ except that at
the edge $e\in Edge(\Gamma )$ I put the tensors 
\begin{equation*}
\beta ^{\vee }([I^{\vee },H^{\vee }]u_{f},v_{f^{\prime }}),~\beta ^{\vee
}(u_{f},v_{f^{\prime }}),\beta ^{\vee }(P^{\vee }u_{f},v_{f^{\prime }})
\end{equation*}%
correspondingly instead of $\beta _{H}^{\vee ,e}$. Then, from (\ref{Idp}) 
\begin{equation*}
W_{\Gamma ,e}^{P}=W_{\Gamma ,e}^{Id}-W_{\Gamma ,e}^{[I,H]}.
\end{equation*}%
By summing over $v\in Vert(\Gamma )$ of the Leibnitz rule (\ref{Im}) and
noticing that 
\begin{equation*}
\beta ^{\vee }(H^{\vee }I^{\vee }u_{f},v_{f^{\prime }})+\beta ^{\vee
}(u_{f},H^{\vee }I^{\vee }v_{f^{\prime }})=-\beta ^{\vee }([I^{\vee
},H^{\vee }]u_{f},v_{f^{\prime }})
\end{equation*}%
I get 
\begin{equation*}
I^{\vee }W_{\Gamma }-\tsum_{e}W_{\Gamma ,e}^{[I,H]}=0.
\end{equation*}%
Next I use (\ref{mm}) to substitute in $W_{\Gamma ,e}^{Id}$ the contraction 
\begin{equation*}
\left\langle m(\pi a,\pi b,\cdot )m(\cdot ,\pi c,\pi d),\beta ^{\vee
}\right\rangle 
\end{equation*}%
corresponding to the internal edge $e\in Edge(\Gamma )$ by 
\begin{equation*}
(-1)^{\varepsilon }\left\langle m(\pi d,\pi a,\cdot )m(\cdot ,\pi b,\pi
c),\beta ^{\vee }\right\rangle .
\end{equation*}%
This corresponds to passing from the trivalent ribon graph $\Gamma $ to the
trivalent ribbon graph $\Gamma ^{\prime }$ obtained by the standard
transformation on the edge $e$, preserving the overal cyclic order of the
flags corresponding to $\pi a$, $\pi b$, $\pi c$, $\pi d$. This
transformation preserves the surface $\Sigma _{\Gamma }$ and the
distribution of elements of $Leg(\Gamma )$ over the boundary components of $%
\Sigma _{\Gamma }$. Therefore the sum of $W_{\Gamma ,e}^{Id}$ over all
internal edges and over the set of trivalent graphs, having the same $\Sigma
_{\Gamma }$ with same distribution of $Leg(\Gamma )$ over the boundary
components, is zero: 
\begin{equation*}
\tsum_{\{\Gamma \},\Sigma _{\Gamma }=\Sigma ,e\in Edge(\Gamma )}W_{\Gamma
,e}^{Id}=0.
\end{equation*}%
Notice that $P^{2}=P$ implies that 
\begin{equation*}
\beta ^{\vee }(P^{\vee }u_{f},v_{f^{\prime }})=\beta ^{\vee }(P^{\vee
}u_{f},P^{\vee }v_{f^{\prime }}).
\end{equation*}%
Then, from the definition of the Batalin-Vilkovisky operator and the odd
Poisson bracket on $B$ it follows that 
\begin{multline*}
\tsum_{\{\Gamma \}}\hbar ^{2-\chi (\Sigma _{\Gamma })}\Delta W_{\Gamma }+%
\frac{1}{2}\{\tsum_{\{\Gamma \}}\hbar ^{1-\chi (\Sigma _{\Gamma })}W_{\Gamma
},\tsum_{\{\Gamma ^{\prime }\}}\hbar ^{1-\chi (\Sigma _{\Gamma ^{\prime
}})}W_{\Gamma ^{\prime }}\}= \\
=\tsum_{\{\widetilde{\Gamma }\},e\in Edge(\widetilde{\Gamma })}\hbar
^{1-\chi (\Sigma _{\Gamma })}W_{\widetilde{\Gamma },e}^{P}
\end{multline*}%
where each term on left hand side corresponds precisely to the right hand
side term $\hbar ^{1-\chi (\Sigma _{\Gamma })}W_{\widetilde{\Gamma },e}^{P}$%
, where $\widetilde{\Gamma }$ is obtained by gluing two legs to form the
edge $e$ from either the single surface or the two surfaces . Notice that
the condition, that $\Delta $ does not get contributions from the
neighboring points on the same circle, and that $\Delta $ and $\{\cdot
,\cdot \}$ do not get contributions from pair of cycles with just one
element on each, corresponds precisely to the fact that the resulting
surface $\Sigma _{\widetilde{\Gamma }}$ has always nonempty subsets of $Legs(%
\widetilde{\Gamma })$ on the boundary components.
\end{proof}

\begin{remark}
In the infinite dimensional case instead of $\beta ^{\vee }(H^{\vee }\cdot
,\cdot )$ the kernel constructed from appropriate resolution of the
diagonal, i.e. $A$ as $A^{op}\otimes A$-bimodule, must be used. Details will
appear elsewhere.  
\end{remark}

\begin{corollary}
\label{corwell}Given a $\mathbb{Z}/2\mathbb{Z-}$graded cyclic $A_{\infty }-$%
algebra $B$, if $B$ has cyclic d($\mathbb{Z}/2\mathbb{Z})$g associative
model $A$, in the sense that $B$ is obtained from $A$ via the summation
over\ trees, such that for $A$ the contractions (\ref{wg}) over arbitrary
trivalent ribbon graphs are well-defined, then the summation over such
graphs gives an extension of the cyclic $A_{\infty }-$algebra on $B$ to the
solution of the non-commutative Batalin-Vilkovisky equation.
\end{corollary}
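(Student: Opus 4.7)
The plan is to apply Theorem \ref{th1} directly to the associative dg-model $A$, taking the odd derivation to be the differential: $I=d$. Since $A$ is a d($\mathbb{Z}/2\mathbb{Z}$)g algebra, one has $I^{2}=0$, so the condition $[I^{2},H]=0$ is automatic. One then chooses a self-adjoint odd homotopy $H$ as in the theorem; the general recipe described in the paper (splitting $A$ into harmonic part, image of $d$, and a complement, and defining $H$ to invert $d$ between image and complement) applies in the usual way. The resulting idempotent $P=Id-[I,H]$ projects $A$ onto a subspace $B'\subset A$ realizing the cohomology of $(A,d)$, and in particular $I|_{B'}=d|_{B'}=0$. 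Under the hypothesis of the corollary, $B'$ is (canonically identified with) $B$.

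Next, I apply Theorem \ref{th1} to obtain $S=\sum_{\Gamma}\hbar^{1-\chi(\Sigma_{\Gamma})}W_{\Gamma}$ in $Symm(\oplus_{j}((\Pi B)^{\otimes j})^{\mathbb{Z}/j\mathbb{Z}})^{\vee}$. The well-definedness of the contractions (\ref{wg}) over every trivalent ribbon graph, and not merely over trees, is exactly the hypothesis imposed on $A$ in the statement (this is the only point that is not automatic from the classical data of an $A_{\infty}$-model). Since $I|_{B}=0$, the equivariant term $I^{\vee}S$ in (\ref{eqBV}) vanishes, and Theorem \ref{th1} yields the plain noncommutative Batalin--Vilkovisky equation $\hbar\Delta S+\tfrac{1}{2}\{S,S\}=0$.

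Finally, one identifies the tree-level part of $S$ with the given $A_{\infty}$-structure on $B$. Writing $S=\sum \hbar^{2g+i-1}S_{i,g}$, the component $S_{1,0}$ is obtained from those $\Gamma$ with $\chi(\Sigma_{\Gamma})=1$, i.e. planar trees with a single boundary disc; for such $\Gamma$ the contraction (\ref{wg}) is precisely the standard tree-summation formula (with propagator $\beta^{\vee}(H^{\vee}\cdot,\cdot)$, associative product at each vertex, and projection $P$ on the external legs) that transfers the cyclic associative structure on $A$ to a cyclic $A_{\infty}$-structure on $B=PA$. By the hypothesis of the corollary, this tree-summation recovers the prescribed cyclic $A_{\infty}$-structure on $B$, so $S_{1,0}=m_{A_{\infty}}$. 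Consequently $S$ is an extension of the cyclic $A_{\infty}$-structure on $B$ to a solution of the noncommutative Batalin--Vilkovisky equation, as claimed.

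The main obstacle is the identification in the third paragraph: one must match the combinatorics of (\ref{wg}) restricted to planar trees with the Kontsevich--Soibelman/Merkulov tree-summation formula, including the Koszul signs and the role of the cyclic pairing $\beta$ in reading off the $A_{\infty}$ tensor. This is bookkeeping rather than a genuinely new argument, and modulo the identification $B\cong PA$ it is essentially built into the form of the hypothesis ("$B$ is obtained from $A$ via summation over trees"). The only other nontrivial assumption is the well-definedness of (\ref{wg}) on arbitrary ribbon graphs, which as the preceding remark indicates may require a suitable resolution of the diagonal in the infinite-dimensional setting, and is accepted as given here.
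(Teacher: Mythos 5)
Your proposal is correct and follows exactly the route the paper intends: the corollary is stated as a direct consequence of Theorem \ref{th1}, applied with $I=d$ (so $I^{2}=0$), $H$ the standard Hodge-type homotopy with $P=Id-[I,H]$ projecting onto $B$ with $I|_{B}=0$, so that the equivariant term drops and the tree-level part of $S$ reproduces the transferred cyclic $A_{\infty}$-structure. Your explicit identification of $S_{1,0}$ with the tree-summation transfer is the same bookkeeping the paper leaves implicit.
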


\subsection{Even scalar product.}

Assume now that the scalar product on $A$ is even: 
\begin{equation*}
\beta :A^{\otimes 2}\rightarrow A.
\end{equation*}%
Then given an \emph{odd differentiation} $I:A\rightarrow \Pi A$ and an odd
selfadjoint operator $H:A\rightarrow \Pi A,~~~$satisfying (\ref{Idp}), I
construct the tensors $W_{\Gamma }$ for any ribbon trivalent graph by the
same contraction (\ref{wg}). The only difference is that in this case, both
the three-tensors $m^{v}$ attached to the vertices and the two-tensors $%
\beta _{H}^{\vee ,e}$ are odd and I sum over oriented ribon graphs where the
orientation is an orientation on the space $k^{Flag(\Gamma )}$. Carefull
analysis of the corresponding orientation on $\Gamma $, analogous to the one
from \cite{B06a}, shows that $W_{\Gamma }$ belongs to the \emph{exterior}
power of the space of cyclic tensors%
\begin{equation*}
W_{\Gamma }\in Symm(\oplus _{j=1}^{\infty }\Pi (\Pi B^{\otimes j})^{\mathbb{Z%
}/j\mathbb{Z}})^{\vee }
\end{equation*}%
Then I define the sum over oriented trivalent graphs parallel to (\ref{S}).
For the case of the even scalar product the variant of the previous theorem
holds. The proof is the same.

\begin{theorem}
The sum over oriented trivalent ribbon graphs $S$ satisfy the equivariant
noncommutative Batalin-Vilkovisky equation: 
\begin{equation*}
\hbar \Delta S+\frac{1}{2}\{S,S\}+I^{\vee }S=0,\,\,
\end{equation*}%
in particular if $I|_{B}=0$ then $S$ is the solution of the non-commutative
Batalin-Vilkovisky equation from \cite{B06a},\cite{B06b} 
\begin{equation*}
\hbar \Delta S+\frac{1}{2}\{S,S\}=0
\end{equation*}%
If $I|_{B}\neq 0$, but $I^{2}|_{B}=0$, then $S+S_{0,2}$ is also a solution
to the non-commutative Batalin-Vilkovisky equation from \cite{B06a},\cite%
{B06b}, where $S_{0,2}=(-1)^{\epsilon }\beta (I\cdot ,\cdot )|_{B}$ is the
quadratic term corresponding to the differential $I|_{B}$.
\end{theorem}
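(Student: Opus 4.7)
The plan is to copy over the proof of Theorem \ref{th1} essentially unchanged, modulo orientation bookkeeping: the only mathematical novelty in the even case is that $m^v$ and $\beta_H^{\vee,e}$ are odd rather than even, so one works with oriented ribbon graphs (orientations on $k^{Flag(\Gamma)}$) and lands in the exterior-power space $Symm(\oplus_{j=1}^\infty \Pi(\Pi B^{\otimes j})^{\mathbb{Z}/j\mathbb{Z}})^\vee$. The author has already asserted, via the orientation analysis analogous to \cite{B06a}, that $W_\Gamma$ belongs to this space, so the proof reduces to repeating the three algebraic identities used in Theorem \ref{th1}.

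First I would, for every oriented trivalent ribbon graph $\Gamma$ and every internal edge $e$, define $W_{\Gamma,e}^{[I,H]}$, $W_{\Gamma,e}^{Id}$, $W_{\Gamma,e}^{P}$ by substituting the tensors $\beta^\vee([I^\vee,H^\vee]u_f,v_{f'})$, $\beta^\vee(u_f,v_{f'})$, $\beta^\vee(P^\vee u_f,v_{f'})$ for $\beta_H^{\vee,e}$ at $e$ in the contraction (\ref{wg}). Then (\ref{Idp}) immediately yields $W_{\Gamma,e}^{P}=W_{\Gamma,e}^{Id}-W_{\Gamma,e}^{[I,H]}$, and summing the Leibniz rule (\ref{Im}) for the odd differentiation $I$ over all vertices, together with the selfadjointness identity
\[\beta^\vee(H^\vee I^\vee u_f,v_{f'})+\beta^\vee(u_f,H^\vee I^\vee v_{f'})=-\beta^\vee([I^\vee,H^\vee]u_f,v_{f'}),\]
gives $I^\vee W_\Gamma=\sum_{e}W_{\Gamma,e}^{[I,H]}$. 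This step is insensitive to the parity of $\beta$.

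Next I would apply the associativity relation (\ref{mm}) to each $W_{\Gamma,e}^{Id}$: the substitution corresponds to the Whitehead move $\Gamma\to\Gamma'$ on the edge $e$, which preserves $\Sigma_\Gamma$ and the distribution of legs on its boundary components. Summing over all $(\Gamma,e)$ with fixed $\Sigma_\Gamma$ and fixed leg distribution then cancels pairwise and yields $\sum W_{\Gamma,e}^{Id}=0$. Finally, the relation $\beta^\vee(P^\vee u_f,v_{f'})=\beta^\vee(P^\vee u_f,P^\vee v_{f'})$ coming from $P^2=P$ lets me identify $\sum_{\widetilde\Gamma,e}\hbar^{1-\chi(\Sigma_{\widetilde\Gamma})}W_{\widetilde\Gamma,e}^{P}$ with $\hbar\Delta S+\frac{1}{2}\{S,S\}$, where $\widetilde\Gamma$ is obtained by gluing two legs inside a single $\Gamma$ or across two disjoint $\Gamma$'s; the nonempty-legs-on-each-boundary-component convention rules out exactly the degenerate contributions that $\Delta$ and $\{\cdot,\cdot\}$ on the exterior-power space ignore. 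Combining the three identities gives $\hbar\Delta S+\frac{1}{2}\{S,S\}+I^\vee S=0$. The closing statements are then formal: if $I|_B=0$ the extra term vanishes, and if $I^2|_B=0$ one checks $\Delta S_{0,2}=0$, $\{S_{0,2},S_{0,2}\}=0$, and $\{S_{0,2},\cdot\}=I^\vee$ on $B$, so that $S+S_{0,2}$ satisfies the plain non-commutative Batalin--Vilkovisky equation.

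The main obstacle is the orientation bookkeeping. In the odd scalar product case the local tensors are even and the tensor products in (\ref{wg}) are canonical, but here one must verify that the Whitehead move on $e$ and the two gluing operations producing $\widetilde\Gamma$ act on the orientation of $k^{Flag(\Gamma)}$ in a way compatible with the Koszul sign $(-1)^\varepsilon$ of (\ref{mm}) and with the exterior-power ordering of cyclic words. Once these sign matches are checked, the cancellations above go through literally as in Theorem \ref{th1}, which is what the author means by asserting that the proof is the same.
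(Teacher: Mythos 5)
Your proposal is correct and follows exactly the route the paper intends: the paper's own proof of this theorem is literally the sentence ``The proof is the same,'' referring back to Theorem \ref{th1}, and you have reproduced that argument (the three tensors $W_{\Gamma,e}^{[I,H]},W_{\Gamma,e}^{Id},W_{\Gamma,e}^{P}$, the Leibniz/Whitehead/idempotent identities) while correctly isolating the orientation bookkeeping on $k^{Flag(\Gamma)}$ as the only genuinely new point in the even scalar product case.
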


\subsection{General algebras.}

Let now $A$, $\dim _{k}A<\infty $ , be an arbitrary $\mathbb{Z}/2\mathbb{Z-}$%
graded algebra, with an \emph{odd differentiation} $I:A\rightarrow \Pi A$.
This case is reduced to the previous cases by putting $\widetilde{A}=A\oplus
(\Pi A)^{\vee }$ with odd scalar product $\beta $ given by natural odd
pairing between $A$ and $(\Pi A)^{\vee }$, or by putting $\widetilde{A}%
=A\oplus A^{\vee }$ with even scalar product $\beta $ given by natural even
pairing between $A$ and $A^{\vee }$. Then $\widetilde{A}$ is naturally an
associative algebra with odd, respectively even, scalar product. For any $%
a^{\ast },b^{\ast }$ from $(\Pi A)^{\vee }$, respectfully from $A^{\vee },$ 
\begin{equation*}
m_{2}(a^{\ast },b^{\ast })=0
\end{equation*}%
and $m_{2}(a^{\ast },b)$ takes value in $(\Pi A)^{\vee }$, respectfully in $%
A^{\vee }$, 
\begin{equation*}
m_{2}(a^{\ast },b)c=a^{\ast }(m_{2}(b,c))
\end{equation*}%
and similarly for $m_{2}(a,b^{\ast })$. The cyclic three-tensor describing
this cyclic associative algebra is simply the initial multplication tensor 
\begin{equation*}
m_{2}\in (\Pi A\otimes \Pi A)^{\vee }\otimes A
\end{equation*}%
considered as element of 
\begin{equation*}
((\Pi A^{\vee }\oplus A)^{\otimes 3})^{\mathbb{Z}/3\mathbb{Z}}
\end{equation*}%
or, respectfully, 
\begin{equation*}
\Pi ((\Pi A^{\vee }\oplus \Pi A)^{\otimes 3})^{\mathbb{Z}/3\mathbb{Z}}
\end{equation*}%
The \emph{odd differentiation} $I$ action extends naturally to $\widetilde{A}
$.

Consider the case of $\widetilde{A}=A\oplus (\Pi A)^{\vee }$ with its odd
scalar product, $\dim _{k}A<\infty $. Suppose that $H$ is an odd operator%
\begin{equation*}
H:A\rightarrow \Pi A,~~~
\end{equation*}%
such that 
\begin{equation}
Id-[I,H]=P  \label{idh}
\end{equation}%
is an idempotent operator $P:A\rightarrow A,$%
\begin{equation*}
~P^{2}=P.
\end{equation*}%
I assume also that $H$ commutes with $I^{2}$, this is automatic if $I^{2}=0$%
. Then both $H$ and $P$ act naturally on $\widetilde{A}$ as self-adjoint
operators and I apply to this situation the construction of tensors $%
W_{\Gamma }$ for ribbon graphs described above. The tensors 
\begin{equation*}
W_{\Gamma }^{B}\in Symm(\oplus _{j=1}^{\infty }((\Pi B^{\vee }\oplus
B)^{\otimes j})^{\mathbb{Z}/j\mathbb{Z}})
\end{equation*}%
are defined by the contraction (\ref{wg}). The contraction is given by the
sum over markings by  
\begin{equation*}
Flag(\Gamma )\rightarrow \{\Pi A,A^{\vee }\}
\end{equation*}%
such that for any edge, its two flags are marked differently, and for any
vertex there is exactly one flag which is marked by $\Pi A$, with no other
extra restrictions. In particular such marking gives an orientation for
every edge, from $A^{\vee }$ to $\Pi A$, and there must be exactly one edge
exiting every vertex. The legs of $\Gamma $, which correspond to the points
sitting on the boundary of the surface $\Sigma _{\Gamma }$, are also marked
as either entries ($B^{\vee }$) or exits ($\Pi B$). And I define $S^{B}$ by
the summation as above 
\begin{equation*}
S^{B}=\tsum_{\{\Gamma \}}\hbar ^{1-\chi (\Sigma _{\Gamma })}W_{\Gamma }^{B}
\end{equation*}%
where the sum is over isomorphism classes of connected trivalent graphs with
such orientation on edges and with nonempty subsets of legs on every
boundary component of $\Sigma _{\Gamma }$.

Similarly I define the tensors $W_{\Gamma }$ in the case of $\widetilde{A}%
=A\oplus A^{\vee }$ with its even pairing. These tensors belong to the space
of exterior powers of linear functionals on cyclic words on elements of $\Pi
B\ $ and $\Pi B^{\vee }$ 
\begin{equation*}
W_{\Gamma }^{\Pi B}\in Symm(\oplus _{j=1}^{\infty }\Pi ((\Pi B\oplus \Pi
B^{\vee })^{\otimes j})^{\mathbb{Z}/j\mathbb{Z}})
\end{equation*}%
and I define $S^{\Pi B}$as their sum over oriented ribbon graphs as above.

\begin{theorem}
Let $A$ be an arbitrary $\mathbb{Z}/2\mathbb{Z-}$graded algebra, $\dim
_{k}A<\infty $ , with an \emph{odd differentiation} $I:A\rightarrow \Pi A$
and a homotopy $H$, for which the operator (\ref{idh}) is idempotent. The
sums over ribbon graphs $S^{B}$ and $S^{\Pi B}$ give the solutions to the
two variants of the equivariant noncommutative Batalin-Vilkovisky equation
in the spaces of symmetric, respectfully exterior powers, of cyclic words,
consisting of elements from $\Pi B^{\vee }$ and $B$, respectfully from $\Pi
B^{\vee }$ and $\Pi B$:
\end{theorem}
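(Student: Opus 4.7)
The plan is to reduce the statement to Theorem \ref{th1} and to its even-scalar-product analogue, applied to the extended algebra $\widetilde{A}$. By the preceding discussion, $\widetilde{A}=A\oplus(\Pi A)^\vee$ (resp.\ $A\oplus A^\vee$) is a cyclic associative algebra with odd (resp.\ even) scalar product $\beta$, and the odd differentiation $I$ extends self-adjointly using $I^\vee$ on the dual summand. The homotopy $H$ also extends by $H^\vee$ on the dual summand to a self-adjoint odd operator on $\widetilde{A}$; the identity $Id-[I,H]=P$ remains idempotent on $\widetilde{A}$, and $[I^2,H]=0$ still holds. This places us exactly in the hypotheses of the earlier theorems, so their conclusions give an \emph{a priori} solution $S$ to the equivariant noncommutative BV equation on cyclic words built from $\widetilde{A}$.

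The second step is to identify the restricted sums $S^B$ and $S^{\Pi B}$ inside $S$. I would decompose every $W_\Gamma$ for $\widetilde{A}$ according to the marking of each flag by the two summands of $\Pi\widetilde{A}$. The vanishing relations
\begin{equation*}
m_2(a^\ast,b^\ast)=0,\qquad m_2(a^\ast,b)\in(\Pi A)^\vee,\quad m_2(a,b^\ast)\in(\Pi A)^\vee
\end{equation*}
force every trivalent vertex to carry exactly one $\Pi A$-marked flag and two dual-marked flags. Dually, $\beta^\vee$ is the canonical pairing between $\Pi A$ and $A^\vee$ (resp.\ $\Pi A^\vee$), so the two flags of every edge must be oppositely marked; since $H$ preserves the decomposition, $\beta_H^{\vee,e}$ does too. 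The surviving markings are exactly the oriented markings that define $W_\Gamma^B$ and $W_\Gamma^{\Pi B}$, and the remaining flags on the legs give the decomposition of $S$ into pieces indexed by patterns of entries/exits on the boundary components of $\Sigma_\Gamma$.

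The third step is to observe that the operators $\Delta$, $\{\cdot,\cdot\}$ and $I^\vee$ respect this bigrading by markings: $\Delta$ and $\{\cdot,\cdot\}$ are built from $\beta^\vee$ which pairs only across the two summands, and $I^\vee$ preserves each summand by construction. Consequently the BV equation supplied by the previous theorems restricts to the sub-bicomplex of cyclic words in $\Pi B^\vee$ and $B$ (resp.\ $\Pi B^\vee$ and $\Pi B$), and that restriction is exactly the equation asserted for $S^B$ and $S^{\Pi B}$.

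The main obstacle I expect is the sign and orientation bookkeeping: one must check that the parity shifts on the two summands of $\widetilde{A}$, the passage between symmetric and exterior powers of cyclic words in the odd and even cases, and the Koszul signs produced by the contractions (\ref{wg}) match those already controlled by the orientation-on-$k^{Flag(\Gamma)}$ analysis used for Theorem \ref{th1} and its even counterpart. I would follow the template of \cite{B06a} for that verification rather than redo it from scratch, so that the algebraic identities (\ref{Im})--(\ref{mm}), applied to $\widetilde{A}$, reproduce verbatim the computation of the proof of Theorem \ref{th1}, and the conclusion descends to $S^B$ and $S^{\Pi B}$ by the decomposition above.
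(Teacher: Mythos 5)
Your proposal is correct and follows essentially the same route as the paper, whose proof is precisely the one-line reduction to the odd/even scalar-product theorems applied to $\widetilde{A}=A\oplus (\Pi A)^{\vee }$ or $\widetilde{A}=A\oplus A^{\vee }$. The additional details you supply (the forced markings of flags, the compatibility of $\Delta $, $\{\cdot ,\cdot \}$ and $I^{\vee }$ with the decomposition) are exactly the points the paper treats in the discussion preceding the theorem, so nothing essentially new or divergent is introduced.
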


\begin{eqnarray*}
\hbar \Delta S^{B}+\frac{1}{2}\{S^{B},S^{B}\}+I^{\vee }S^{B} &=&0 \\
\hbar \Delta S^{\Pi B}+\frac{1}{2}\{S^{\Pi B},S^{\Pi B}\}+I^{\vee }S^{\Pi B}
&=&0.
\end{eqnarray*}

\begin{proof}
This is an immediate corollary of the theorem from the previous subsection
for the algebras with odd/ even invariant scalar product $\widetilde{A}%
=A\oplus (\Pi A)^{\vee }$ or $\widetilde{A}=A\oplus A^{\vee }$.
\end{proof}

\section{Graphs with the insertion of $A_{\infty }-$tensors.}

Assume now that $A$ is a $\mathbb{Z}/2\mathbb{Z-}$graded $A_{\infty }-$%
algebra, $\dim _{k}A<\infty $ . I relax, as above, the condition of the
square of differential equals to zero, and assume that it is simply an \emph{%
odd operator} $I:A\rightarrow \Pi A$, which together with other structure
maps $m_{n}\in ((\Pi A)^{\otimes n})^{\vee }\otimes A$, $n\geq 2$, satisfy
the standard $A_{\infty }-$constrains, except perhaps the very first, so
that, in general $I^{2}\neq 0$ : for any $n\geq 2$%
\begin{multline}
Im_{n}(v_{1},\ldots ,v_{n})-\tsum_{l}(-1)^{\epsilon }m_{n}(v_{1},\ldots
,Iv_{l},\ldots v_{n})=  \label{imn} \\
=\tsum_{i+j=n+1}(-1)^{\epsilon }m_{i}(v_{1},\ldots ,m_{j}(\ldots ),\ldots
v_{n})  \notag
\end{multline}%
or, equivalently, 
\begin{equation*}
I^{\vee }m+\{m,m\}=0
\end{equation*}

I assume first that $A$ has also an invariant odd scalar product $\beta $ so
that all tensors 
\begin{equation*}
m_{n}\in ((\Pi A)^{\otimes n+1})^{\vee },\beta (~m_{n}(v_{1},\ldots
,v_{n}),v_{n+1})
\end{equation*}%
are cyclic invariant, the variant without scalar product is reduced as above
to this case by taking $\widetilde{A}=A\oplus (\Pi A)^{\vee }$, see below.

Let as above $H$ be an odd selfadjoint operator%
\begin{equation*}
H:A\rightarrow \Pi A,~~~H^{\vee }=H
\end{equation*}%
such that 
\begin{equation}
Id-[I,H]=P  \label{idhh}
\end{equation}%
is an idempotent operator $P:A\rightarrow A$, whose image I denote by $B$. I
assume also as above that $H$ commutes with $I^{2}$, this is of course
automatic if $I^{2}=0$.

Now I define the tensors $W_{\Gamma }$, by inserting the cyclyc tensors $%
m_{n(v)}\in $ $((\Pi A)^{\otimes Flag(v)})^{\vee }$ at vertices, as above,
where $\Gamma $ is now a ribbon graph, with valency $n(v)$ for any vertice
at least three:%
\begin{equation*}
W_{\Gamma }(\tbigotimes_{l\in Leg(\Gamma )}a_{l})=\left\langle
\tbigotimes_{v\in Vert(\Gamma )}m_{n(v)},\left( \tbigotimes_{e\in
Edge(\Gamma )}\beta _{H}^{\vee ,e}\right) \tbigotimes_{l\in Leg(\Gamma
)}a_{l}\right\rangle
\end{equation*}%
and 
\begin{equation*}
W_{\Gamma }\in Symm(\oplus _{j=1}^{\infty }(\Pi B^{\otimes j})^{\mathbb{Z}/j%
\mathbb{Z}})^{\vee }
\end{equation*}%
The following is the standard lemma, which ensures that the sum over ribbon
graphs is actually finite at each order of $\hbar $.

\begin{proposition}
The number of ribbon graphs, with valency at every vertex $n(v)\geq 3$, with
fixed $\chi (\Sigma _{\Gamma })$ and fixed number of exterior legs is finite.
\end{proposition}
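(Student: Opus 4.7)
The plan is to adapt, with one minor modification, the flag counting argument used in the earlier Proposition for trivalent graphs. The only change is that the equality $n(\Gamma) + 2|Edge(\Gamma)| = 3|Vert(\Gamma)|$ is replaced by the inequality
\begin{equation*}
n(\Gamma) + 2|Edge(\Gamma)| = \tsum_{v\in Vert(\Gamma)} n(v) \geq 3|Vert(\Gamma)|,
\end{equation*}
which reflects the hypothesis $n(v)\geq 3$ at every vertex. The Euler characteristic identity $\chi(\Sigma_\Gamma) = |Vert(\Gamma)| - |Edge(\Gamma)|$ still holds unchanged, since $\Sigma_\Gamma$ deformation retracts onto the geometric realization of $\Gamma$, with legs contributing equal numbers of $0$- and $1$-cells and therefore not affecting $\chi$.

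Combining the two relations to eliminate $|Edge(\Gamma)|$ yields
\begin{equation*}
|Vert(\Gamma)| \leq n(\Gamma) - 2\chi(\Sigma_\Gamma), \qquad |Edge(\Gamma)| \leq n(\Gamma) - 3\chi(\Sigma_\Gamma).
\end{equation*}
Hence the total flag count $n(\Gamma) + 2|Edge(\Gamma)|$ is bounded above by a quantity depending only on $n(\Gamma)$ and $\chi(\Sigma_\Gamma)$. Once the number of flags is bounded, the remaining combinatorial data defining a ribbon graph --- the partition $\lambda$ into vertices, the involution $\sigma$ giving edges and legs, and the cyclic order on $Flag(v)$ at each vertex --- are finite choices on a finite set, so there are only finitely many isomorphism classes of such ribbon graphs.

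I do not anticipate any real obstacle; the argument is essentially the trivalent one with the equality weakened to an inequality. The only point that warrants a brief mention is the Euler characteristic formula for ribbon graphs with legs, where one should verify that legs do not contribute to the count $V - E$, but this is immediate from the homotopy equivalence $\Sigma_\Gamma \simeq |\Gamma|$ cited already in the proof of the earlier Proposition.
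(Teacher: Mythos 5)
Your argument is correct and is essentially the paper's own proof: both rest on the flag count $n(\Gamma)+2|Edge(\Gamma)|=\sum_{v}n(v)$ together with $\chi(\Sigma_\Gamma)=|Vert(\Gamma)|-|Edge(\Gamma)|$, the paper phrasing the conclusion via the exact identity $\sum(n-2)v_n=n(\Gamma)-2\chi(\Sigma_\Gamma)$ with $n-2\geq 1$ where you use the inequality $\sum_v n(v)\geq 3|Vert(\Gamma)|$ directly. The bounds you obtain, $|Vert(\Gamma)|\leq n(\Gamma)-2\chi(\Sigma_\Gamma)$ and $|Edge(\Gamma)|\leq n(\Gamma)-3\chi(\Sigma_\Gamma)$, match the paper's conclusion.
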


\begin{proof}
Similarly to above, if I denote the number of vertices of valency $n$ via $%
v_{n}$: 
\begin{equation*}
\chi (\Sigma _{\Gamma })+|Edge(\Gamma )|=\tsum v_{n}
\end{equation*}%
and 
\begin{equation*}
n(\Gamma )+2|Edge(\Gamma )|=\tsum nv_{n}.
\end{equation*}%
It follows that 
\begin{equation*}
\tsum (n-2)v_{n}=n(\Gamma )-2\chi (\Sigma _{\Gamma })
\end{equation*}%
and hence $\tsum v_{n}\leq const$ and $|Edges(\Gamma )|\leq const$.
\end{proof}

At the next step however, looking carefully at the proof of the equation for 
$S$ above, one sees that one immediately runs into a problem because of
tadpoles, i.e. self-contractions of nonneighbouring flags at the same
vertex, unless the following important condition 
\begin{equation*}
\Delta m_{n}=0
\end{equation*}%
is imposed, which I assume from now till the end of this section.

I define next, similarly to above,%
\begin{equation}
S=\tsum_{\{\Gamma \}}\hbar ^{1-\chi (\Sigma _{\Gamma })}W_{\Gamma }
\label{SA}
\end{equation}%
where the sum is over isomorphism classes of connected ribbon graphs with
vertices of valency $n(v)\geq 3$, and with nonempty subsets of legs on every
boundary component of $\Sigma _{\Gamma }$.

\begin{theorem}
Let the odd operator $I$ and the cyclically invariant tensors $m_{n}\in
((\Pi A)^{\otimes n+1})^{\vee }$, $n\geq 2$, satisfy 
\begin{eqnarray*}
I^{\vee }m+\{m,m\} &=&0 \\
\Delta m &=&0
\end{eqnarray*}%
Then, given the homotopy $H$ (\ref{idhh}), the sum $S$ (\ref{SA}) over
ribbon graphs satisfy the equivariant noncommutative Batalin-Vilkovisky
equation associated with $(B,\beta |_{B})$: 
\begin{equation*}
\hbar \Delta S+\frac{1}{2}\{S,S\}+I^{\vee }S=0,\,\,
\end{equation*}%
in particular if $I|_{B}=0$ then $S$ is the solution of the non-commutative
Batalin-Vilkovisky equation from \cite{B06a},\cite{B06b} 
\begin{equation*}
\hbar \Delta S+\frac{1}{2}\{S,S\}=0
\end{equation*}%
If $I|_{B}\neq 0$, but $I^{2}|_{B}=0$, then $S+S_{0,2}$ is also a solution
to the non-commutative Batalin-Vilkovisky equation from \cite{B06a},\cite%
{B06b}, where $S_{0,2}=(-1)^{\epsilon }\beta (I\cdot ,\cdot )|_{B}$ is the
quadratic term corresponding to the differential $I|_{B}$.
\end{theorem}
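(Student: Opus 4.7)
The proof follows the template of Theorem~\ref{th1}, with two crucial modifications: the cyclic associativity identity (\ref{mm}) is replaced by the full $A_\infty$ relation $I^\vee m+\{m,m\}=0$, and the hypothesis $\Delta m=0$ is needed precisely to kill tadpole contributions arising from the $A_\infty$ compositions.

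For every connected ribbon graph $\Gamma$ with vertices of valency $\geq 3$ and every internal edge $e\in Edge(\Gamma)$, I introduce the three auxiliary tensors $W^{[I,H]}_{\Gamma,e}$, $W^{Id}_{\Gamma,e}$, $W^{P}_{\Gamma,e}$ exactly as in the proof of Theorem~\ref{th1}, by replacing the propagator $\beta_H^{\vee,e}$ at $e$ by the kernels $\beta^\vee([I^\vee,H^\vee]\cdot,\cdot)$, $\beta^\vee(\cdot,\cdot)$, and $\beta^\vee(P^\vee\cdot,\cdot)$, respectively. The relation (\ref{idhh}) yields the tautology $W^{P}_{\Gamma,e}=W^{Id}_{\Gamma,e}-W^{[I,H]}_{\Gamma,e}$, and the edge-cutting/gluing matching
\[
\hbar\,\Delta S + \frac{1}{2}\{S,S\} \;=\; \sum_{\{\widetilde\Gamma\},\,e\in Edge(\widetilde\Gamma)} \hbar^{1-\chi(\Sigma_{\widetilde\Gamma})}\,W^{P}_{\widetilde\Gamma,e}
\]
transports verbatim from Theorem~\ref{th1}: it uses only the factorization of $P^\vee$ at $e$, the dictionary identifying the $P$-gluing of two legs within a single surface with $\hbar\,\Delta$ and the $P$-gluing between two separate surfaces with $\frac{1}{2}\{\cdot,\cdot\}$, and the nonempty-legs-per-boundary condition that excludes degenerate contributions. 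None of these sees the valencies of the vertices of $\widetilde\Gamma$.

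The $A_\infty$-specific step is to sum the identity (\ref{imn}) at every vertex $v$ of $\Gamma$ and contract the result against the graph. The leg occurrences of $I$, coming both from $Im_{n(v)}$ and from the Leibniz defects $m_{n(v)}(\ldots,Iv_l,\ldots)$ at external flags, assemble into $I^\vee W_\Gamma$; the internal flag occurrences are, via
\[
\beta^\vee(H^\vee I^\vee u_f, v_{f'}) + \beta^\vee(u_f, H^\vee I^\vee v_{f'}) = -\beta^\vee([I^\vee,H^\vee]u_f, v_{f'}),
\]
exactly $-\sum_e W^{[I,H]}_{\Gamma,e}$. The composition terms $\sum_{i+j=n(v)+1}(-1)^\epsilon m_i(\ldots,m_j(\ldots),\ldots)$ at $v$ are recognized, after unpacking Koszul signs, as the contractions $-W^{Id}_{\Gamma',e'}$ associated with the ribbon graphs $\Gamma'$ obtained by blowing $v$ up into two vertices of valencies $i$ and $j$ joined by a new internal edge $e'$ contracted with the bare $\beta^\vee$. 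The resulting correspondence between triples (graph $\Gamma$, vertex $v$, cyclic split of $v$) and pairs (blown-up graph $\Gamma'$, new internal edge $e'$) is a bijection which preserves $\chi(\Sigma_\Gamma) = \chi(\Sigma_{\Gamma'})$. The blow-ups in which the new edge $e'$ would be a loop at a single vertex of $\Gamma'$ are precisely the self-contractions, whose contribution equals $\Delta m_{n(v)}$ and therefore vanishes by the standing hypothesis. Reindexing the double sum yields
\[
I^\vee S - \sum_{\Gamma,e}\hbar^{1-\chi}W^{[I,H]}_{\Gamma,e} + \sum_{\Gamma',e'}\hbar^{1-\chi}W^{Id}_{\Gamma',e'} = 0.
\]

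Combining this identity with the tautology and with the BV matching of the previous step gives
\[
\hbar\,\Delta S + \frac{1}{2}\{S,S\} + I^\vee S \;=\; \sum_{\Gamma,e}\hbar^{1-\chi}(W^{Id}_{\Gamma,e}-W^{[I,H]}_{\Gamma,e}) + I^\vee S \;=\; 0,
\]
which is the equivariant noncommutative Batalin-Vilkovisky equation. The two concluding statements of the theorem follow as in Theorem~\ref{th1}: if $I|_B=0$ then $I^\vee S$ vanishes outright, and if $I^2|_B=0$ the quadratic counterterm $S_{0,2}=(-1)^\epsilon\beta(I\cdot,\cdot)|_B$ absorbs $I^\vee$ into $\{S_{0,2},\,\cdot\}$ so that $S+S_{0,2}$ solves the plain BV equation. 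The delicate point throughout the argument is the blow-up correspondence with its Koszul signs and the explicit identification of the tadpole contribution with $\Delta m_{n(v)}$; this is the only place where the hypothesis $\Delta m=0$ is actually used.
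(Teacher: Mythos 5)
Your proof is correct and takes the same route the paper intends: the paper's own proof here is the single sentence ``the proof is parallel to the above,'' deferring to Theorem \ref{th1}, and your argument is a faithful expansion of that parallel, with the relation $I^{\vee }m+\{m,m\}=0$ replacing the separate Leibnitz and associativity identities and with $\Delta m=0$ killing exactly the loop-edge terms $W_{\Gamma ,e}^{Id}$ that the vertex-splitting bijection cannot account for. One phrasing quibble: a genuine blow-up never creates a loop, so the tadpoles are more accurately the pairs $(\Gamma ,e)$ with $e$ a loop, arising from the $\Delta $-gluing of two nonneighbouring legs at a single vertex and lying outside the image of your bijection --- but your identification of their contribution with $\Delta m_{n(v)}$, and hence their vanishing under the standing hypothesis, is exactly right.
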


\begin{proof}
The proof is parallel to the above.
\end{proof}

As above the case of algebra with no scalar product is reduced to the cyclic
algebra case.

\begin{proposition}
Same result holds in the context of arbitrary $\mathbb{Z}/2\mathbb{Z-}$%
graded equivariant $A_{\infty }-$algebra, $\dim _{k}A<\infty $ , with odd
differentiation $I:A\rightarrow \Pi A$, 
\begin{equation*}
I^{\vee }m+\{m,m\}=0
\end{equation*}%
As above the algebra must satisfy in addition the condition $\Delta m=0$,
satisfied trivially by the associative algebras. This case is reduced to the
previous cases by putting $\widetilde{A}=A\oplus (\Pi A)^{\vee }$ with odd
scalar product $\beta $ given by natural odd pairing between $A$ and $(\Pi
A)^{\vee }$, or by putting $\widetilde{A}=A\oplus A^{\vee }$ with even
scalar product $\beta $ given by natural even pairing between $A$ and $%
A^{\vee }$.
\end{proposition}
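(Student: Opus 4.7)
The plan is to mimic the doubling reduction used in the ``General algebras'' subsection, passing from $A$ to a cyclic $A_\infty$-algebra $\widetilde{A}$ with scalar product and then invoking the preceding theorem directly. I form $\widetilde{A} = A \oplus (\Pi A)^\vee$ (respectively $\widetilde{A} = A \oplus A^\vee$) with the canonical odd (resp.\ even) pairing $\beta$ given by evaluation, and I extend each $m_n$ to a cyclically invariant tensor $\widetilde{m}_n \in ((\Pi \widetilde{A})^{\otimes (n+1)})^{\mathbb{Z}/(n+1)\mathbb{Z}}$ by the standard trivial-extension prescription: on a cyclic word $\widetilde{m}_n$ vanishes unless exactly one of the $n+1$ flags carries a dual element $f$ and the remaining $n$ carry elements $a_1,\ldots,a_n \in A$, in which case the value is $f(m_n(a_1,\ldots,a_n))$, propagated by cyclic symmetry with the appropriate Koszul signs.

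Next I extend $I$ and $H$ to operators $\widetilde{I}, \widetilde{H}$ on $\widetilde{A}$ by letting them act as $I$ and $H$ on $A$ and as minus their transpose on the dual summand. These extensions are odd and self-adjoint with respect to $\beta$; $\widetilde{H}$ commutes with $\widetilde{I}^{2}$ because $H$ commutes with $I^{2}$; and $Id-[\widetilde{I},\widetilde{H}]=\widetilde{P}$ is idempotent with image $B\oplus(\Pi B)^\vee$ (resp.\ $B\oplus B^\vee$).

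The main step is to check the three hypotheses of the preceding theorem for $(\widetilde{A},\beta,\widetilde{I},\widetilde{H})$. Cyclic invariance of $\widetilde{m}_n$ is built in. The relation $\widetilde{I}^\vee \widetilde{m} + \{\widetilde{m},\widetilde{m}\} = 0$ follows from $I^\vee m + \{m,m\} = 0$ on $A$: every term in $\{\widetilde{m},\widetilde{m}\}$ is supported on cyclic words carrying exactly one dual flag, and summing the two ways such a term can arise (according to whether the glued edge meets the dual flag or two $A$-flags) reassembles, via the evaluation pairing, the $A_\infty$-relation on $A$; the same analysis handles $\widetilde{I}^\vee \widetilde{m}$. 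Finally, $\Delta \widetilde{m} = 0$ translates into the given condition $\Delta m = 0$ on $A$ once one unpacks the dissection-gluing definition of $\Delta$ and uses that $\beta^\vee$ pairs $A$ with the dual summand; in the associative case ($m_n = 0$ for $n\geq 3$) it is automatic.

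Having verified the hypotheses, the preceding theorem applies to $\widetilde{A}$ and produces a solution of the equivariant BV equation on $B\oplus(\Pi B)^\vee$ (resp.\ $B \oplus B^\vee$), which by the marking $Flag(\Gamma) \to \{\Pi A, A^\vee\}$ (resp.\ $\{\Pi A, \Pi A^\vee\}$) introduced in the preceding subsection decomposes into the sums $S^B$ and $S^{\Pi B}$ as required. The main obstacle is the bookkeeping of Koszul signs and of the dual-flag profile in the verification of the quadratic relation and of $\Delta \widetilde{m} = 0$, but this is the same combinatorics that already appeared in the associative case and introduces no new difficulty.
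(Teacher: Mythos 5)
Your proposal is correct and follows exactly the route the paper intends: the statement's own ``proof'' is precisely the reduction to the cyclic case via the trivial extension $\widetilde{A}=A\oplus(\Pi A)^{\vee}$ (resp.\ $A\oplus A^{\vee}$), spelled out for the associative case in the ``General algebras'' subsection, and you have simply filled in the routine verifications (extension of $m_n$, of $I$ and $H$, and the transfer of the relations $I^{\vee}m+\{m,m\}=0$ and $\Delta m=0$) that the paper leaves implicit.
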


\begin{theorem}
Given arbitrary solution to the equivariant non-commutative
Batalin-Vilkovisky equation on $\mathbb{Z}/2\mathbb{Z-}$graded vector space $%
A$, $\dim _{k}A<\infty $, with odd scalar product $\beta $, 
\begin{gather}
\widehat{m}\in Symm(\oplus _{j=1}^{\infty }(\Pi A^{\otimes j})^{\mathbb{Z}/j%
\mathbb{Z}})^{\vee }[[\hbar ]], \\
\hbar \Delta \widehat{m}+\frac{1}{2}\{\widehat{m},\widehat{m}\}+I^{\vee }%
\widehat{m}=0,\,\,  \notag
\end{gather}
and a homotopy $H$ (\ref{idhh}), I define the tensors $W_{\Gamma }$ for 
\emph{stable} ribbon graphs by contraction as above and the sum $S$ over
such \emph{stable} ribbon graphs, then $S$ is again a solution to the
equivariant non-commutative Batalin-Vilkovisky equation on $(B,\beta |_{B})$:%
\begin{equation*}
\hbar \Delta S+\frac{1}{2}\{S,S\}+I^{\vee }S=0
\end{equation*}%
The same result holds in the case of the even scalar product.

\begin{proof}
Analogous to the above, see also the general case of algebras over modular
operad in the next section.
\end{proof}
\end{theorem}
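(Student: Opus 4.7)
The plan is to mimic the proof of Theorem \ref{th1} (and its $A_\infty$ variant immediately preceding this statement) almost verbatim, with stable ribbon graphs replacing trivalent/$A_\infty$ ribbon graphs, and with the equivariant BV equation for $\widehat{m}$ at each vertex taking the role that associativity (\ref{mm}) or the $A_\infty$-relations played before. Concretely, for each stable ribbon graph $\Gamma$ and each internal edge $e\in Edge(\Gamma)$, I would first define the three tensors $W^{Id}_{\Gamma,e}$, $W^{[I,H]}_{\Gamma,e}$, $W^{P}_{\Gamma,e}$ by replacing the propagator $\beta^{\vee,e}_H$ at $e$ by $\beta^\vee$, $\beta^\vee([I^\vee,H^\vee]\cdot,\cdot)$, and $\beta^\vee(P^\vee\cdot,\cdot)$ respectively; the hypothesis (\ref{idhh}) then gives $W^{P}_{\Gamma,e}=W^{Id}_{\Gamma,e}-W^{[I,H]}_{\Gamma,e}$ exactly as before.

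Next I would expand $I^\vee W_\Gamma$ via the Leibniz rule through the contraction. The edges contribute $\sum_e W^{[I,H]}_{\Gamma,e}$, via the identity $\beta^\vee(H^\vee I^\vee u,v)+\beta^\vee(u,H^\vee I^\vee v)=-\beta^\vee([I^\vee,H^\vee]u,v)$ used in Theorem \ref{th1}. At each vertex $v$ the derivation $I^\vee$ now lands on $\widehat{m}_{i(v),g(v)}$ and, \emph{unlike} the associative/$A_\infty$ case, does not vanish: the BV hypothesis replaces it by $-\hbar\Delta\widehat{m}-\tfrac{1}{2}\{\widehat{m},\widehat{m}\}$. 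This yields
\[
I^\vee W_\Gamma=\sum_{e\in Edge(\Gamma)}W^{[I,H]}_{\Gamma,e}-\sum_{v\in Vert(\Gamma)}\Bigl[\hbar\Delta\widehat{m}+\tfrac12\{\widehat{m},\widehat{m}\}\Bigr]_v\text{-contracted with the rest.}
\]
The key combinatorial identification is that inserting $\Delta\widehat{m}_{i(v),g(v)}$ (resp.\ $\{\widehat{m},\widehat{m}\}_v$) at $v$ with the plain pairing $\beta^\vee$ describes exactly the stable ribbon graph $\widetilde{\Gamma}$ obtained by splitting $v$ via a new non-separating or cycle-redistributing edge (resp.\ a separating edge joining $v$ to a new vertex), the new edge carrying an $\mathrm{Id}$-propagator. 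After summing over $\Gamma$ with weights $\hbar^{1-\chi(\Sigma_\Gamma)}$ and reparametrizing in terms of pairs $(\widetilde{\Gamma},e)$, this expression becomes $\sum_{(\widetilde{\Gamma},e)}\hbar^{1-\chi(\Sigma_{\widetilde{\Gamma}})}W^{Id}_{\widetilde{\Gamma},e}$.

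Finally, the Theorem \ref{th1} computation that represents $\hbar\Delta S+\tfrac12\{S,S\}$ as a sum of leg-gluings with the $P$-pairing only uses the description of $\Delta$ and $\{\cdot,\cdot\}$ on $Symm(\oplus_{j\geq 1}((\Pi B)^{\otimes j})^{\mathbb{Z}/j\mathbb{Z}})^\vee$, not the trivalence of graphs, and so applies verbatim to produce $\sum_{(\widetilde{\Gamma},e)}\hbar^{1-\chi(\Sigma_{\widetilde{\Gamma}})}W^{P}_{\widetilde{\Gamma},e}$. Adding the three identities and applying $W^P=W^{Id}-W^{[I,H]}$ produces an exact term-by-term cancellation, yielding $\hbar\Delta S+\tfrac12\{S,S\}+I^\vee S=0$; the even scalar product case is parallel via the orientation analysis already invoked in the preceding theorem. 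The principal obstacle is the bookkeeping of the middle step: one must verify that the $\hbar$-weights and $\chi$-shifts match (the separating versus non-separating split of $v$ tracking the $\tfrac12\{,\}$ versus $\hbar\Delta$ parts of the BV operator), that the Koszul signs propagate consistently through the whole contraction, and that the ``nonempty legs on every boundary component'' constraint, the absence of neighbouring-point $\Delta$-contributions, and the stability $2g(v)-2+n(v)>0$ are all preserved under the graph-splitting operations, exactly as they were in Theorem \ref{th1}.
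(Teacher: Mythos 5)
Your proposal is correct and follows exactly the route the paper intends: the paper's proof of this theorem is literally ``analogous to the above,'' i.e.\ the Theorem \ref{th1} argument with the tensors $W_{\Gamma ,e}^{[I,H]},W_{\Gamma ,e}^{Id},W_{\Gamma ,e}^{P}$, the only new ingredient being that the vertex contribution of $I^{\vee }$ no longer vanishes by associativity but is traded, via $I^{\vee }\widehat{m}=-\hbar \Delta \widehat{m}-\frac{1}{2}\{\widehat{m},\widehat{m}\}$, for the $W^{Id}$-terms on edge-uncontracted stable graphs --- which is precisely the reorganization you describe. The bookkeeping points you flag ($\hbar$-weights versus $\chi$-shifts, signs, and the boundary/stability constraints) are indeed the only things left to check, and they go through as in Theorem \ref{th1}.
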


\section{Construction of solutions to the $\mathcal{P}$-Batalin-Vilkovisky
equation.\label{sectmodop}}

Let now $A=\oplus _{i}A^{i}$ be a $\mathbb{Z}-$graded vector space with the
odd scalar product $\beta $ of degree $2l+1$, and let $A$ is an algebra over 
$\mathcal{FP}$, the Feynman transform of a modular operad $\mathcal{P}$. I
consider for simplicity the $\mathbb{Z}-$graded version as defined in \cite%
{GK}, see also \cite{B06a}. Without loss of generality one can assume that $%
l=1,$the general case is reduced to this case by a twisting by a cocycle.
The parallel results hold in the $\mathbb{Z}/2\mathbb{Z}-$graded setting and
also for even scalar products, I leave details to an interested reader.

By theorem 1 from \cite{B06a}, an algebra over $\mathcal{FP}$ is defined by
set of elements 
\begin{equation*}
m_{n,b}\in \left( \left( (A{}[1]\right) ^{\otimes n})^{\vee }\otimes 
\mathcal{P}((n,b))\right) ^{\mathbb{S}_{n}}
\end{equation*}%
with $\widehat{m}=\sum_{n,b}\hbar ^{b}m_{n,b}$ satisfying the
Batalin-Vilkovisky equation of $\mathcal{P}-$geometry, the equation (5.5)
from \cite{B06a}.

As above I consider more general notion, which is natural to call
"equivariant algebra over $\mathcal{FP}$". This is the $\mathbb{Z}$-graded
vector space $A$ with scalar product $\beta $, with a degree $1$\emph{\ }%
anti-selfadjoint\emph{\ }operator $I:A\rightarrow A[1]$,and provided with
the morphism from the free $\mathcal{K}-$twisted modular operad 
\begin{equation*}
\Phi :\mathbb{M}_{\mathcal{K}}\mathcal{P}^{dual}\rightarrow \mathcal{E}[A]
\end{equation*}%
equivariant with respect to the odd differentiations 
\begin{equation*}
I\circ \Phi =\Phi \circ d_{\mathcal{FP}}.
\end{equation*}%
This is equivalent for $\widehat{m}$ to satisfy the equivariant version of
the $\mathcal{P}-$Batalin-Vilkovisky equation: 
\begin{equation}
\hbar \Delta \widehat{m}+\frac{1}{2}\{\widehat{m},\widehat{m}\}+I^{\vee }%
\widehat{m}=0,~~~m_{n,b}\in \left( \left( (A{}[1]\right) ^{\otimes n})^{\vee
}\otimes \mathcal{P}((n,b))\right) ^{\mathbb{S}_{n}}  \label{pBV}
\end{equation}%
where $\Delta $ and $\{\cdot ,\cdot \}$ are defined in \cite{B06a}, .

Let as above $H$ be a homotopy, degree $-1$, selfadjoint operator%
\begin{equation*}
H:A\rightarrow A[-1],~~~H^{\vee }=H
\end{equation*}%
such that 
\begin{equation*}
Id-[I,H]=P
\end{equation*}%
is an idempotent operator $P:A\rightarrow A$, whose image I denote by $B$.
Assume as above that $H$ commutes with $I^{2}$, this is automatic if $I^{2}=0
$. Next I define the summation over $\mathcal{P}-$decorated graphs with
legs, i.e. graphs with decorations from $\mathcal{P}((Flag(v)))$ attached to
vertices. For a stable graph $\Gamma $ I put

\begin{itemize}
\item \bigskip the tensors 
\begin{equation*}
m^{v}\in \left( \left( (A{}[1]\right) {}^{\otimes Flag(v)})^{\vee }\otimes 
\mathcal{P}((Flag(v),b(v)))\right) ^{Aut(Flag(v))}
\end{equation*}%
on every vertex $v$, obtained from $m_{n(v),b(v)}$ using the functor of
extension to finite sets,

\item the two tensors 
\begin{gather*}
\beta _{H}^{\vee ,e}\in (A[1])^{\otimes \{f,f^{\prime }\}}, \\
\beta _{H}^{\vee ,e}=\beta ^{\vee }(H^{\vee }u_{f},v_{f^{\prime }})=(-1)^{%
\overline{u_{f}}~\overline{v_{f^{\prime }}}}\beta ^{\vee }(H^{\vee
}v_{f^{\prime }},u_{f})
\end{gather*}%
for any interieur edge $e=(ff^{\prime })$

\item element $a_{l}\in B[1]$, for any leg $l$.
\end{itemize}

Notice that both $m^{v}$ and $\beta _{H}^{\vee ,e}$ are degree zero
elements, so that the products $\tbigotimes_{v\in Vert(\Gamma )}m^{v}$ and $%
\tbigotimes_{e\in Edge(\Gamma )}\beta _{H}^{\vee ,e}$ are canonically
defined. Recall that for the modular operad $\mathcal{P}$, for any stable
graph $\Gamma $ a bijection $Leg(\Gamma )\leftrightarrow \{1,\ldots n\}$
there is the composition map%
\begin{equation*}
\mu _{\Gamma }:\tbigotimes_{v\in Vert(\Gamma )}\mathcal{P}%
((Flag(v),b(v)))\rightarrow \mathcal{P}((n(\Gamma ),b(\Gamma )))
\end{equation*}

\begin{definition}
I define the tensor 
\begin{equation*}
W_{\Gamma }\in \left( ((B[1]){}^{\otimes n(\Gamma )})^{\vee }\otimes 
\mathcal{P}((n(\Gamma ),b(\Gamma )))\right) ^{\mathbb{S}_{n(\Gamma )}}
\end{equation*}%
as the contraction of tensors on $A[1]$ times the $\mathcal{P}-$composition
structure map $\mu _{\Gamma }$ 
\begin{equation}
W_{\Gamma }(\tbigotimes_{l\in Leg(\Gamma )}a_{l})=\left\langle
\tbigotimes_{v\in Vert(\Gamma )}m^{v},\left( \tbigotimes_{e\in Edge(\Gamma
)}\beta _{H}^{\vee ,e}\right) \tbigotimes_{l\in Leg(\Gamma
)}a_{l}\right\rangle \otimes \mu _{\Gamma }  \label{Wp}
\end{equation}
\end{definition}

For a given graph $\Gamma $ and choice of basis in every $\mathcal{P}%
((Flag(v),b(v)))$ this expresion is the sum, over markings of vertices of $%
\Gamma $ by basis elements of $\mathcal{P}$, of the corresponding
contractions of tensors on $A[1]$.

\begin{definition}
The sum over $\mathcal{P}$-marked graphs is defined by 
\begin{equation}
S_{n,b}=\sum_{\Gamma \in \lbrack G((n,b))]}W_{\Gamma }  \label{Sp}
\end{equation}%
where $[G((n,b))]$ denotes the set of isomorphisms classes of pairs $(\Gamma
,\rho )$ where $\Gamma $ is a stable graph with $n(\Gamma )=n$, $b(\Gamma
)=b $ and $\rho $ is a bijection $Leg(\Gamma )\leftrightarrow \{1,\ldots n\}$%
. I put 
\begin{equation*}
S=\sum_{n,b}\hbar ^{b}S_{n,b}
\end{equation*}
\end{definition}

The sum in the definition of $S_{n,b}$ is finite, see \cite{GK} lemma 2.16.

\begin{theorem}
The series $S$, given by the sum over $\mathcal{P}$-marked graphs, satisfy
the equivariant Batalin-Vilkovisky equation associated with the modular
operad $\mathcal{P}$: 
\begin{equation*}
\hbar \Delta S+\frac{1}{2}\{S,S\}+I^{\vee }S=0,\,\,
\end{equation*}%
in particular if $I^{2}|_{B}=0$ then $S$ is the solution of the
Batalin-Vilkovisky equation, associated with the modular operad $\mathcal{P}$%
, from \cite{B06a}: 
\begin{equation*}
\hbar \Delta S+\frac{1}{2}\{S,S\}+dS=0
\end{equation*}%
where I denoted by $d$ the differential $I^{\vee }|_{B^{\vee }}$. By the
theorem 1 of \cite{B06a} , this is equivalent to the fact, that $S$ defines
on $B$ the structure of algebra over $\mathcal{FP}$, the Feynman transform
of the modular operad $\mathcal{P}$.
\end{theorem}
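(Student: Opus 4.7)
My plan is to follow the same structure as Theorem \ref{th1}, with the equivariant $\mathcal{P}$-BV equation (\ref{pBV}) for $\widehat{m}$ now playing the role that strict associativity (the edge-flip) played in the trivalent case. For each stable graph $\Gamma$ and internal edge $e$, I would introduce three variants
\[ W^{[I,H]}_{\Gamma, e},\qquad W^{Id}_{\Gamma, e},\qquad W^P_{\Gamma, e} \]
of the tensor $W_\Gamma$ obtained from (\ref{Wp}) by replacing the propagator $\beta_H^{\vee, e}$ at the distinguished edge $e$ by $\beta^\vee([I^\vee, H^\vee]\cdot, \cdot)$, $\beta^\vee(\cdot, \cdot)$, and $\beta^\vee(P^\vee \cdot, \cdot)$ respectively, leaving the $\mathcal{P}$-composition map $\mu_\Gamma$ and the propagators on all other edges unchanged. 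The relation $Id - [I, H] = P$ immediately yields the fundamental identity $W^{Id}_{\Gamma, e} = W^P_{\Gamma, e} + W^{[I,H]}_{\Gamma, e}$.

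Next I would rewrite each term of the target equation as a sum of these variants. Since $\Delta$ and $\{\cdot, \cdot\}$ on the $B$-side are defined using $\beta^\vee$ restricted to $B$, i.e.\ $\beta^\vee(P^\vee\cdot, \cdot)$ (by $P^2 = P$, as in Theorem \ref{th1}), both $\Delta$-contractions (gluing two legs of one $W_\Gamma$) and $\{\cdot,\cdot\}$-contractions (gluing legs across $W_{\Gamma_1}$ and $W_{\Gamma_2}$) combine to give
\[ \hbar \Delta S + \tfrac{1}{2}\{S, S\} = \sum_{(\widetilde{\Gamma}, e)} \hbar^{b(\widetilde{\Gamma})} W^P_{\widetilde{\Gamma}, e}, \]
summed over isomorphism classes of pairs of a stable graph $\widetilde{\Gamma}$ together with a distinguished internal edge $e$. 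For $I^\vee S$ I would apply $I^\vee$ as a Leibniz derivation of the contraction. The edge part uses the calculation $\beta^\vee(H^\vee I^\vee u_f, v_{f'}) + \beta^\vee(u_f, H^\vee I^\vee v_{f'}) = -\beta^\vee([I^\vee, H^\vee] u_f, v_{f'})$ from the proof of Theorem \ref{th1} and contributes $\sum_{\Gamma, e} \hbar^{b(\Gamma)} W^{[I,H]}_{\Gamma, e}$. The vertex part is evaluated by substituting (\ref{pBV}) vertex-by-vertex, $I^\vee m^v = -\tfrac{1}{2}\{\widehat{m}, \widehat{m}\}^v - \hbar \Delta \widehat{m}^v$: by the very definition of the operadic bracket and $\Delta$, the $\{\widehat{m}, \widehat{m}\}^v$ term splits $v$ into two vertices joined by a new edge carrying the bare propagator $\beta^\vee$, and $\hbar \Delta \widehat{m}^v$ attaches a self-loop at $v$ with the same bare propagator $\beta^\vee$, so in each case the resulting contraction is precisely $W^{Id}_{\widetilde{\Gamma}, e_{\text{new}}}$ on the modified graph.

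Reindexing the sum over $(\Gamma, v, \text{split-or-loop datum})$ by the pair (modified graph, new distinguished edge) turns the vertex contribution to $I^\vee S$ into $-\sum_{(\widetilde{\Gamma}, e)} \hbar^{b(\widetilde{\Gamma})} W^{Id}_{\widetilde{\Gamma}, e}$, parametrized exactly as in the display above. Assembling the three pieces,
\[ \hbar \Delta S + \tfrac{1}{2}\{S, S\} + I^\vee S = \sum_{(\widetilde{\Gamma}, e)} \hbar^{b(\widetilde{\Gamma})} \bigl(W^P_{\widetilde{\Gamma}, e} + W^{[I,H]}_{\widetilde{\Gamma}, e} - W^{Id}_{\widetilde{\Gamma}, e}\bigr) = 0 \]
by the fundamental identity. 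The main obstacle will be the bookkeeping: aligning Koszul signs (from the odd operator $I$ and the degree shift to $A[1]$), the symmetry factors $\tfrac{1}{2}$ in $\tfrac{1}{2}\{S,S\}$ and in $\tfrac{1}{2}\{\widehat{m},\widehat{m}\}^v$ (which should be absorbed by the involution exchanging the two halves of the split vertex or the two graph components), the automorphism factors of the stable graphs $\widetilde{\Gamma}$, and the $\mathcal{K}$-twisting orientation signs, between the "glue two legs" parametrization of $\hbar\Delta S + \tfrac{1}{2}\{S,S\}$ and the "vertex-split / self-loop" parametrization of the vertex contribution to $I^\vee S$. Once this sign alignment is verified, the rest of the argument repeats that of Theorem \ref{th1}, the only new feature being that the vertex corrections from $\{\widehat{m},\widehat{m}\}$ and $\Delta\widehat{m}$ in (\ref{pBV}) supply the $W^{Id}$ contribution that was previously produced by the edge-flip via strict associativity.
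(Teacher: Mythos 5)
Your proposal is correct and follows essentially the same route as the paper: the paper's own proof is a one-line reduction to Theorem \ref{th1} via the three tensors $W_{\Gamma,e}^{[I,H]}$, $W_{\Gamma,e}^{Id}$, $W_{\Gamma,e}^{P}$ together with the $\mathcal{P}$-type Batalin--Vilkovisky operator, and your argument fills in exactly those steps. In particular you correctly identify the one genuinely new ingredient relative to the trivalent associative case, namely that the cancellation of the $W^{Id}$ terms now comes from the vertex-wise substitution of the equivariant $\mathcal{P}$-BV equation (\ref{pBV}) (vertex splittings from $\{\widehat{m},\widehat{m}\}$ and self-loops from $\hbar\Delta\widehat{m}$) rather than from the associativity edge-flip.
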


\begin{proof}
The proof is parallel to the proof of the theorem \ref{th1} above, via
introducing the tensors $W_{\Gamma ,e}^{[I,H]},W_{\Gamma ,e}^{Id},W_{\Gamma
,e}^{P}$ and verifying that they satisfy analogous relations. One has to use
the definition of the $\mathcal{P}$-type Batalin-Vilkovisky operator $\Delta 
$ from \cite{B06a}.
\end{proof}

\begin{remark}
The particular case of this statement, in its nonequivariant version with $%
I^{2}=0$, applied for the modular extension of the $L_{\infty }-$operad,
gives the transfer of  solutions to the ordinary (commutative)
Batalin-Vilkovisky equation. In the case  $\widetilde{A}=A\oplus
(A[1])^{\vee }$ one gets the solutions via the summation over graphs with
oriented edges. A similar statement which starts from solutions of degree $%
\leq 3$ to the commutative Batalin-Vilkovisky equation, satisfying certain
extra boundary conditions, and in which the sum is taken over the subset of
"directed" graphs of the set of oriented graphs, is established in \cite{M08}%
, together with some generalisation. 
\end{remark}

\begin{remark}
Any cyclic operad can be considered as modular with zero selfcontractions.
Hence this theorem gives also the analogous result for algebras over the Bar
-transform of cyclic operads, and also, via setting  $\widetilde{A}=A\oplus
A^{\vee }$ for  ordinary operads, see \cite{H}, \cite{M08} and references
therein. The equivariant version, with the odd derivation relaxing the
condition on the differential $d^{2}\neq 0$, is new even in the standard
case of ordinary/cyclic operads.
\end{remark}

\begin{proposition}
For any solution to the \emph{equivariant} Batalin-Vilkovisky equation, the
construction of the homology class of the associated graph complex of $%
\mathcal{P}$-marked graphs from \cite{B06a} works without change and gives
the homology class of the complex dual to $\mathcal{FP}|_{n(\Gamma )=0}$.
\end{proposition}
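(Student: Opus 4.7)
The plan is to adapt the graph-complex homology construction of \cite{B06a} to the equivariant setting by absorbing the extra $I^{\vee}\widehat{m}$ term of (\ref{pBV}) into the internal differential of the dual graph complex. First I would set up the target complex: the dual of $\mathcal{FP}|_{n(\Gamma)=0}$ decomposes, as a graded vector space, as a direct sum indexed by isomorphism classes of connected legless stable $\mathcal{P}$-decorated graphs, weighted by $1/|Aut(\Gamma)|$, with vertex decorations in $\mathcal{P}((Flag(v),b(v)))^{\vee}$. Its differential is $\partial=\partial_{edge}+\partial_{int}$, where $\partial_{edge}$ is dual to operadic composition along a one-edge graph (cutting an edge of $\Gamma$ into two flags distributed over one or two vertices), and $\partial_{int}$ is induced on each vertex decoration by the $\mathcal{FP}$-differential; in the equivariant formalism the latter is realized through the $I$-action on the $A$-side.

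Next I would form the candidate cocycle
\[
Z = \sum_{b\geq 0}\hbar^{b}\sum_{[\Gamma]}\frac{1}{|Aut(\Gamma)|}W_{\Gamma},
\]
using the contraction (\ref{Wp}) applied to legless stable graphs, with vertex tensors $m_{n(v),b(v)}$ supplied by the solution $\widehat{m}$ and edges carrying the propagator determined by $\beta^{\vee}$. I would then compute $\partial Z$ term by term and match each contribution to a term of the BV equation (\ref{pBV}) located at a vertex.

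More precisely, the contributions to $\partial_{edge}Z$ coming from self-gluings (two flags meeting at a single vertex) reproduce, after the standard $|Aut|^{-1}$ reweighting, the $\hbar\Delta\widehat{m}$ summand at that vertex; the contributions coming from gluing one flag of each of two distinct vertices reproduce $\tfrac{1}{2}\{\widehat{m},\widehat{m}\}$; these two reductions are literally the argument from theorem 1 of \cite{B06a}. The new contribution $\partial_{int}Z$ reproduces the $I^{\vee}\widehat{m}$ term at each vertex. Assembling these three pieces vertex-by-vertex, $\partial Z$ becomes a signed sum over pairs $(\Gamma,v)$ of $\bigl(\hbar\Delta\widehat{m}+\tfrac{1}{2}\{\widehat{m},\widehat{m}\}+I^{\vee}\widehat{m}\bigr)$ inserted at vertex $v$, which vanishes identically by (\ref{pBV}). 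Hence $Z$ is a cocycle and determines the desired class in the dual of $\mathcal{FP}|_{n(\Gamma)=0}$.

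The main obstacle is sign-tracking: one has to verify that the Koszul sign produced when $d_{\mathcal{FP}}$ acts on a $\mathcal{P}^{dual}$-decoration at a single vertex of $\Gamma$, propagated through the contraction (\ref{Wp}), coincides with the sign with which $I^{\vee}$ acts on the corresponding tensor of $\widehat{m}$. Once this compatibility of internal differentials is established, the remainder is a verbatim repetition of the non-equivariant argument of \cite{B06a}, which is precisely why the construction requires no modification.
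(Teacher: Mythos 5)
Your reduction of the edge part of the computation to the $\hbar\Delta\widehat{m}$ and $\tfrac{1}{2}\{\widehat{m},\widehat{m}\}$ contributions is the content of theorem 1 of \cite{B06a} and is fine, but your treatment of the equivariant term is where the argument breaks. The proposition asserts that the construction works \emph{without change} and produces a class in the complex dual to $\mathcal{FP}|_{n(\Gamma)=0}$ equipped with its own differential, namely the edge-contraction differential $d_{\mathcal{FP}}$. You instead enlarge that differential by an operator $\partial_{int}$ ``realized through the $I$-action on the $A$-side''; but $I$ is an operator on $A$, not on the $\mathcal{P}^{dual}$-decorations, so this $\partial_{int}$ is not a well-defined endomorphism of the dual of $\mathcal{FP}|_{n(\Gamma)=0}$, and even granting it, you would only have exhibited a cocycle for a modified complex rather than for the complex named in the statement.

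The missing observation --- which is essentially the entire content of the paper's proof --- is that the term $I^{\vee}\widehat{m}$ contributes \emph{nothing} on the legless component. The equivariance $I\circ\Phi=\Phi\circ d_{\mathcal{FP}}$ restricted to $n(\Gamma)=0$ reads $\Phi_{n(\Gamma)=0}(d_{\mathcal{FP}}(\alpha))=I(\Phi_{n(\Gamma)=0}(\alpha))$, and the right-hand side vanishes because the odd operator $I$ acts by zero on $k=\mathcal{E}[A]((0,b))$. In the language of your computation: the sum over all flags of $\Gamma$ of $I$-insertions, pushed through the edge contractions using the $I$-invariance of $\beta^{\vee}$, telescopes to $I$ applied to the scalar $W_{\Gamma}$, hence is identically zero when $Leg(\Gamma)$ is empty. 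Once this is noted, your $\partial_{int}Z$ vanishes separately, so $\partial_{edge}Z=0$ follows and $Z$ is a cocycle for the unmodified differential --- which is exactly what ``works without change'' means. Without this vanishing your argument does not establish the proposition as stated.
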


\begin{proof}
This is immediate from the definition, the $n(\Gamma )=0$ part of the map $%
\Phi $ satisfies%
\begin{equation*}
\Phi _{n(\Gamma )=0}(d_{\mathcal{FP}}(\alpha ))=I(\Phi _{n(\Gamma
)=0}(\alpha ))=0
\end{equation*}%
since the odd operator $I$ acts by zero on $k=\mathcal{E}[A]((0,b))$.
\end{proof}

\begin{theorem}
Let $\widehat{m}$ be a solution to the equivariant Batalin-Vilkovisky
equation of $\mathcal{P}-$geometry (\ref{pBV}) and let $H$ be a self-adjoint
nilpotent homotopy as above. Then the solution $S$ to (\ref{pBV}) obtained
by the summation over $\mathcal{P}$-marked graphs (\ref{Wp}), (\ref{Sp}),
and $\widehat{m}$ have the same characteristic homology class in the graph
complex of $\mathcal{P}$-marked graphs.$\square $
\end{theorem}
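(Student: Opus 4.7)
The plan is to reduce the statement to the functoriality of the characteristic class construction under the dg-Lie quasi-isomorphism induced by the contraction $B=PA\hookrightarrow A$, and then to exhibit the explicit primitive produced by the homotopy $H$. First I would recall from \cite{B06a}, section 5.1, that a solution $\widehat{m}$ to the equivariant $\mathcal{P}$-BV equation on $A$ is the same datum as a Maurer--Cartan element in the dg-Lie algebra $\underline{Mor}(\mathbb{M}_{\mathcal{K}}\mathcal{P}^{dual},\mathcal{E}[A])$, and that the characteristic class of the preceding proposition is obtained by restricting the associated morphism $\Phi$ to the sub-dg module $\mathcal{FP}|_{n(\Gamma)=0}$, for which $\mathcal{E}[A]((0,b))=k$. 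So both $c(\widehat{m})$ and $c(S)$ live in the cohomology of one and the same graph complex, namely the one dual to $\mathcal{FP}|_{n(\Gamma)=0}$, and the question is whether they agree there.

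Next I would unfold the definition (\ref{Sp}) of $S$ when its characteristic class is applied to a stable graph $\Gamma_{0}$ with no legs. Substituting $S_{n,b}=\sum_{\Gamma'}W_{\Gamma'}$ at every vertex $v$ of $\Gamma_{0}$ expands the result into a sum over stable graphs $\widetilde{\Gamma}$ equipped with a partition of their edges into \emph{outer} edges (those of $\Gamma_{0}$, carrying the propagator $\beta^{\vee}(P^{\vee}\cdot,\cdot)$) and \emph{inner} edges (those internal to the various $\Gamma'_{v}$, carrying $\beta^{\vee}(H^{\vee}\cdot,\cdot)$). Using $P^{\vee}=Id^{\vee}-[I^{\vee},H^{\vee}]$ on each outer edge, this splits as a product of two types of factors. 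The pure $\beta^{\vee}$ contributions, summed over all decompositions, will telescope into the expression $c(\widehat{m})(\widetilde{\Gamma})$ for every stable $\widetilde{\Gamma}$, reproducing $c(\widehat{m})$ with multiplicity one.

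The commutator terms, containing at least one factor $\beta^{\vee}([I^{\vee},H^{\vee}]\cdot,\cdot)$, are identified with the coboundary of an explicit graph cochain. On any edge carrying such a factor, the $I^{\vee}$ is transported by the Leibniz rule onto the two adjacent vertices; using the equivariant $\mathcal{P}$-BV equation $\hbar\Delta\widehat{m}+\frac{1}{2}\{\widehat{m},\widehat{m}\}+I^{\vee}\widehat{m}=0$, this converts into a $\hbar\Delta\widehat{m}$-term together with a $\frac{1}{2}\{\widehat{m},\widehat{m}\}$-term. These two operations on graphs, self-gluing and edge-contraction, are precisely the pieces defining the differential $d_{\mathcal{FP}}^{\vee}$ on the graph cochain complex dual to $\mathcal{FP}|_{n(\Gamma)=0}$ from \cite{B06a}. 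Thus the commutator terms match the coboundary of the primitive whose value on $\widetilde{\Gamma}$ is the sum over the choice of one marked edge carrying the $H$-propagator, the remaining outer edges carrying $P^{\vee}$, and the remaining inner edges untouched.

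The main obstacle is the Koszul and orientation sign verification in the last step: one must match the combinatorics of the outer/inner edge partition, the multiplicities from the sum over the placement of the marked $H$-edge, and the three summands $\hbar\Delta$, $\frac{1}{2}\{\cdot,\cdot\}$, $I^{\vee}$ of the $\mathcal{P}$-BV operator, to the corresponding pieces of the boundary operator of the graph complex as set up in \cite{B06a}, section 5. This is tedious but routine once the bookkeeping of that paper is imported. Once it is carried out, $c(S)-c(\widehat{m})$ appears explicitly as a coboundary, completing the proof.
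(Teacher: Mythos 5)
First, note that the paper itself gives no argument for this theorem: it is stated with a terminal $\square$, so the only available benchmark is the mechanism the paper uses for Theorem \ref{th1} (the trichotomy $W^{P}_{\Gamma,e}=W^{Id}_{\Gamma,e}-W^{[I,H]}_{\Gamma,e}$ combined with the Batalin--Vilkovisky equation) together with the remark in the introduction that the transfer is the action of the quasi-isomorphism induced by $PA\hookrightarrow A$. Your overall strategy --- expand $c(S)$ over composite graphs with $\beta^{\vee}(P^{\vee}\cdot,\cdot)$ on outer edges and $\beta^{\vee}(H^{\vee}\cdot,\cdot)$ on inner edges, isolate $c(\widehat{m})$, and exhibit the rest as a coboundary in the complex dual to $\mathcal{FP}|_{n(\Gamma)=0}$ --- is exactly this mechanism transplanted to the no-leg part, and the identification of the $\hbar\Delta$ and $\tfrac12\{\cdot,\cdot\}$ terms with the two pieces of the graph differential is correct.

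There is, however, a genuine gap in the middle step. After you substitute $P^{\vee}=Id^{\vee}-[I^{\vee},H^{\vee}]$ on the outer edges, the difference $c(S)-c(\widehat{m})$ is \emph{not} exhausted by terms ``containing at least one factor $\beta^{\vee}([I^{\vee},H^{\vee}]\cdot,\cdot)$'': every composite graph $\widetilde{\Gamma}\neq\Gamma_{0}$ in which all outer edges receive the $Id^{\vee}$ summand still carries $H^{\vee}$ on its inner edges and contributes to the remainder while containing no commutator at all, so your mechanism of transporting $I^{\vee}$ off a commutator onto the adjacent vertices has nothing to act on for these terms. (Relatedly, your claim that the pure $\beta^{\vee}$ contributions ``telescope into $c(\widehat{m})(\widetilde{\Gamma})$ for every stable $\widetilde{\Gamma}$'' is garbled: the $H$-free part of $c(S)(\Gamma_{0})$ is the single term $\widetilde{\Gamma}=\Gamma_{0}$ with every edge marked $Id^{\vee}$.) Closing this requires organizing the argument as an induction on the number of $H$-propagators, or equivalently an interpolation $H\mapsto tH$, $P_{t}=Id-t[I,H]$, in which the derivative in $t$ produces precisely your one-marked-$H$-edge cochain with multiplicity equal to the number of $H$-edges, and the commutator-free, $H$-carrying terms are generated at the next stage of the telescope by the $\hbar\Delta\widehat{m}+\tfrac12\{\widehat{m},\widehat{m}\}$ output of the previous one. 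This is the actual content of the proof rather than a routine sign check, and as written your primitive is not shown to have the right coboundary. A minor further point: you never use the stated hypothesis that $H$ is nilpotent; if it is needed (e.g.\ for convergence of the telescope), that should be made explicit.
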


\section{$A$-infinity $gl(N)-$equivariant matrix integrals.\label%
{sectionIntegr}}

Here I briefly describe the $A$-infinity $gl(N)-$equivariant matrix
integrals which I've introduced in \cite{B06b}. I focus on the
odd-dimensional case. I recall in particular the results from \cite{B09a},
that establish the correspondence between solutions to the noncommutative
Batalin-Vilkovisky equation and equivariantly closed differential forms on $%
\left( gl(N|N)\otimes \Pi V\right) _{0}$.

Let $V$ be a $\mathbb{Z}/2\mathbb{Z}$-graded vector space, $\dim
_{k}V<\infty $ , with \emph{odd }scalar product $\beta :V^{\otimes
2}\rightarrow \Pi k$. Let 
\begin{gather}
m_{A_{\infty }}\in \tbigoplus_{j}(((\Pi V)^{\otimes j})^{\mathbb{Z}/j\mathbb{%
Z}})^{\vee },~ \\
\{m_{A_{\infty }},m_{A_{\infty }}\}=0  \label{mmA}
\end{gather}%
be the cyclic tensor defining the structure of cyclic $A_{\infty }-$algebra
on $A$, with the invariant odd scalar product $\beta $. To any product of
cyclic words from 
\begin{equation*}
Symm(\oplus _{j=1}^{\infty }((\Pi V\oplus \Pi k_{\Lambda })^{\otimes j})^{%
\mathbb{Z}/j\mathbb{Z}})^{\vee }
\end{equation*}%
I associate, using the invariant theory, the invariant functions, which are
product of traces, from%
\begin{equation}
\left( \mathcal{O}(gl(N|N)\otimes \Pi V)\otimes \mathcal{O}(\Pi
gl(N|N))\right) ^{gl(N|N)}  \label{oglnv}
\end{equation}%
In particular $Tr(m_{A_{\infty }})$ denotes the function on $gl(N|N)\otimes
\Pi V$ 
\begin{equation*}
\tsum_{(i_{1}\ldots i_{k})}(-1)^{\epsilon }m_{A_{\infty }},_{i_{1}\ldots
i_{k}}tr(X^{i_{1}}\cdot \ldots \cdot X^{i_{k}}),~~~X=X^{i}\otimes \pi
e_{i},~X^{i}\in gl(N|N),~e_{i}\in V
\end{equation*}%
extending the cyclic $A_{\infty }$ tensor $m_{A_{\infty }}$. I've introduced
in \cite{B06b} integrals

\begin{equation}
\int_{\gamma \in \left( gl(N|N)\otimes \Pi V\right) _{0}}\exp \frac{1}{\hbar 
}(s_{\Lambda }+Tr(m_{A_{\infty }})+\sum_{2g+i>2}\hbar
^{2g+i-1}S_{i,g}^{mTr})\varphi ~dX  \label{expmaplus}
\end{equation}%
where 
\begin{equation*}
s_{\Lambda }=\tsum_{i_{1}i_{2}}(-1)^{\epsilon }\beta _{i_{1}i_{2}}tr([\Xi
,X^{i_{1}}],X^{i_{2}}),
\end{equation*}%
is the hamiltonian of action of  odd matrix $\Xi \in gl(N|N)_{1}$, which in
generic case without loss of generality can be assumed to be of the form $%
\Xi =\left( 
\begin{array}{cc}
0 & Id \\ 
\Lambda  & 0%
\end{array}%
\right) $, $\Lambda \in $ $gl(N).$ And $S_{i,g}^{mTr}$ are the multitrace
elements corresponding to products of cyclic words 
\begin{equation*}
S_{i,g}\in Symm^{i}(\oplus _{j=1}^{\infty }((\Pi V)^{\otimes j})^{\mathbb{Z}%
/j\mathbb{Z}})^{\vee }
\end{equation*}%
Functions on the odd symplectic affine manifold $gl(N|N)\otimes \Pi V$ are
identified canonically with polyvector fields on $\left( gl(N|N)\otimes \Pi
V\right) _{0}$ which are in turn identified with differential forms on the
same space, using a constant in the affine coordinates holomorphic volume
form $dX$. The invariant functions from (\ref{oglnv}) are then mapped to $%
gl(N)$- equivariant differential forms, with respect to $gl(N)$ acting by
conjugation via block-diagonal embedding. Let $d_{DR}-i_{\Lambda }$ denotes
the $gl(N)$-equivariant differential.

\begin{proposition}
\begin{enumerate}
\item \label{propintegr} The lagrangian $S=m_{A_{\infty
}}+\sum_{2g+i>2}\hbar ^{2g+i-1}S_{i,g}$ satisfies the noncommutative
Batalin-Vilkovisky equation (\ref{ncBV}) if and only if the  $gl(N)-$%
equivariant differential form corresponding to $s_{\Lambda }+S$ is closed
for any $N$%
\begin{equation*}
\left( d_{dR}-i_{\Lambda }\right) \left( \exp \frac{1}{\hbar }(s_{\Lambda
}+Tr(m_{A_{\infty }})+\sum_{2g+i>2}\hbar ^{2g+i-1}S_{i,g}^{mTr})\vdash
dX\right) =0
\end{equation*}

\item Similarly, for any 
\begin{equation*}
\varphi =\sum_{i,g\geq 0}\hbar ^{2g+i-1}\varphi _{i,g},~~\varphi _{i,g}\in
Symm^{i}(\oplus _{j=1}^{\infty }((\Pi B)^{\otimes j})^{\mathbb{Z}/j\mathbb{Z}%
})^{\vee },
\end{equation*}%
\begin{gather*}
\hbar \Delta \varphi +\frac{1}{2}\{S,\varphi \}=0\Longleftrightarrow  \\
\left( d_{dR}-i_{\Lambda }\right) \left( \varphi ^{mTr}\exp \frac{1}{\hbar }%
(s_{\Lambda }+S^{mTr})\vdash dX\right) =0
\end{gather*}

\item  For 
\begin{equation*}
f\in \left( \mathcal{O}(gl(N|N)\otimes \Pi (V\oplus k_{\Lambda })\right)
^{gl(N|N)}(\hbar )
\end{equation*}
the $gl(N)-$equivariant differential form corresponding to $s_{\Lambda }+f$
is closed  
\begin{equation*}
\left( d_{dR}-i_{\Lambda }\right) \left( \exp \frac{1}{\hbar }(s_{\Lambda
}+f)\vdash dX\right) =0
\end{equation*}%
if and only if $f$ satisfies the noncommutative equivariant
Batalin-Vilkovisky equation 
\begin{equation*}
\hbar \Delta f+\frac{1}{2}\{f,f\}+I^{\vee }f=0,\,\,
\end{equation*}%
where $I=[\Xi ,\cdot ]$
\end{enumerate}
\end{proposition}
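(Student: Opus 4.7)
The plan is to build a dictionary translating the noncommutative BV calculus on products of cyclic tensors into the equivariant de Rham calculus of differential forms on the odd-symplectic super-affine space $gl(N|N) \otimes \Pi V$, and then to exponentiate. The odd scalar product $\beta$ on $V$ combined with the supertrace on $gl(N|N)$ endows $gl(N|N) \otimes \Pi V$ with a canonical $gl(N|N)$-invariant odd symplectic form $\omega$. Contraction with the constant holomorphic volume form $dX$ yields the sequence of isomorphisms from functions (via $\omega^{-1}$) to polyvector fields (via $\vdash dX$) to differential forms. By classical invariant theory for $gl(N|N)$ acting by conjugation, the multitrace map $S \mapsto S^{mTr}$ sends products of cyclic words onto the invariant part, and letting $N$ range over all positive integers makes this map injective.

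The core of the proof is to verify three correspondences at the level of multitrace invariants. First, the noncommutative BV operator $\Delta$, defined via dissection-gluing of cyclic cochains using $\beta^\vee$, becomes the de Rham differential $d_{dR}$ under $(\cdot)^{mTr} \vdash dX$: inserting $\beta^\vee$ between two flags of a cyclic word corresponds exactly to $\partial/\partial X^i \otimes \partial/\partial X^j$ contracted with $\beta^{ij}$ acting on products of traces, which is $d_{dR}$ in the polyvector-to-form dictionary. Second, the odd Poisson bracket on cyclic tensors becomes the bracket induced by $\omega$, which in form language produces the cross terms appearing when $d_{dR}$ is applied to a product of multitraces. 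Third, the operator $I^\vee$ becomes $-i_\Lambda$: since $s_\Lambda$ is the quadratic $\Xi$-hamiltonian, the Poisson bracket $\{s_\Lambda, \cdot\}$ realises $I = [\Xi, \cdot]$ on functions, and via the relation $\Xi^2 = \operatorname{diag}(\Lambda,\Lambda)$ and $\vdash dX$ this becomes contraction with the fundamental vector field of $\Lambda$ on forms.

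With these three identifications, each part follows from the exponential identity
\[
\bigl(\hbar \Delta + \tfrac{1}{2}\{\cdot,\cdot\} + I^\vee\bigr) e^{F/\hbar} \;=\; \tfrac{1}{\hbar}\bigl(\hbar \Delta F + \tfrac{1}{2}\{F,F\} + I^\vee F\bigr)\, e^{F/\hbar},
\]
together with the easy check that $s_\Lambda$ itself satisfies $\hbar \Delta s_\Lambda + \tfrac{1}{2}\{s_\Lambda,s_\Lambda\} + I^\vee s_\Lambda = 0$ (a direct computation using the $\Xi$-invariance of $\beta$, together with the fact that $s_\Lambda$ is quadratic so $\Delta s_\Lambda$ is a scalar absorbed into the constant term of $\Delta$). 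Part 1 is then the equivalence applied with $F = S + s_\Lambda$, invoking injectivity of $(\cdot)^{mTr}$ as $N$ varies for the ``for any $N$'' clause. Part 2 follows from the Leibniz rule for $\hbar\Delta + \{S, \cdot\}$ applied to the product $\varphi^{mTr} \cdot e^{(s_\Lambda + S^{mTr})/\hbar}$, which factors the closedness equation into a closedness of the prefactor modulo $S$. Part 3 is the equivalence applied directly with $F = f$.

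The main obstacle is the precise verification of the first correspondence: translating the purely combinatorial dissection-gluing definition of $\Delta$ into $d_{dR}$ via the polyvector-to-form map, with all super-Koszul signs, the normalisation of $\beta^\vee$, and the conventions of $\vdash dX$ on a super-manifold tracked. Once this is in place, the bracket correspondence is its first-order consequence, and the $I^\vee \leftrightarrow -i_\Lambda$ identification reduces to the classical Poisson computation $\{s_\Lambda,X\} = [\Xi,X]$ combined with the Cartan-model identity that squaring the odd symmetry produces the Lie-algebra element inserted by $i_\Lambda$.
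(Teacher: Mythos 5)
Your proposal is essentially correct in outline, but note that the paper itself offers no argument here: its entire proof reads ``The proof is immediate from the results of \cite{B09a}.'' What you have written is, in effect, a reconstruction of the dictionary that \cite{B09a} is being cited for --- the identification of the noncommutative BV calculus on (symmetric powers of) cyclic tensors with the $gl(N)$-equivariant Cartan model on $\left(gl(N|N)\otimes \Pi V\right)_{0}$ via the multitrace map, the odd symplectic form, and the polyvector-to-form isomorphism $\vdash dX$. Your three correspondences ($\Delta\leftrightarrow d_{dR}$ as divergence with respect to $dX$, the odd bracket as the first-order defect of $\Delta$ on products, and $s_{\Lambda}$ as the hamiltonian of $\mathrm{ad}_{\Xi}$ implementing $I^{\vee}\leftrightarrow -i_{\Lambda}$ through $\Xi^{2}=\mathrm{diag}(\Lambda,\Lambda)$), together with injectivity of the multitrace map for $N$ large, are exactly the right skeleton, and the ``for any $N$'' clause is handled correctly.

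Two points deserve tightening. First, the displayed ``exponential identity'' does not parse as written, since $\{\cdot,\cdot\}$ is binary; what you mean is the standard identity for an odd second-order operator,
\begin{equation*}
\hbar^{2}\,\Delta\!\left(e^{F/\hbar}\right)+\hbar\, I^{\vee}\!\left(e^{F/\hbar}\right)=\left(\hbar\Delta F+\tfrac{1}{2}\{F,F\}+I^{\vee}F\right)e^{F/\hbar},
\end{equation*}
which is where the bracket term is generated. Second, to reconcile part 1 (the \emph{plain} equation $\hbar\Delta S+\tfrac{1}{2}\{S,S\}=0$, with no $I^{\vee}S$ term) with your general equivariant dictionary, you must make explicit that the cross term $\{s_{\Lambda},S^{mTr}\}$ vanishes because the multitraces are invariant under conjugation by all of $gl(N|N)$, including the odd element $\Xi$, while $\tfrac{1}{2}\{s_{\Lambda},s_{\Lambda}\}$ is exactly the quadratic hamiltonian cancelled by $i_{\Lambda}$ and $\Delta s_{\Lambda}$ vanishes by the supertrace property of $gl(N|N)$; by contrast, in part 3 the function $f$ is allowed to depend on the extra odd direction $\Pi k_{\Lambda}$, which is why $I^{\vee}f$ survives there. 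With these clarifications your sketch is a faithful account of what the cited reference supplies.
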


\begin{proof}
The proof is immediate from the results of \cite{B09a}.
\end{proof}

\begin{remark}
The algebra $gl(N|N)$ with its even trace, can be replaced without any other
change in the formalism, by any finite dimensional super associative algebra 
$g$ with trace, which satisfy $tr_{g}(l_{a})=0$, where $l_{a}$ is the
operator of multiplication by the arbitrary element $a$. The most part of
the formalism works without change also for any finite dimensional super
associative algebra with trace.
\end{remark}

\subsection{Equivariant matrix integrals for associative algebras and
intersections on moduli spaces of curves.}

\begin{proposition}
Let $A$, $\dim _{k}A<\infty $, be associative d($\mathbb{Z}/2\mathbb{Z}$%
)g-algebra with \emph{odd }scalar product $\beta $, multiplication described
by cyclic 3-tensor $m_{2}$. Then $\Delta (m_{2})=0$ and%
\begin{equation*}
S=\left( -\frac{1}{2}\tsum_{i_{1}i_{2}}(-1)^{\epsilon }\beta
_{i_{1}i_{2}}tr([\Xi ,X^{i_{1}}],X^{i_{2}})+\frac{1}{3!}%
\tsum_{i_{1}i_{2}i_{3}}(-1)^{\epsilon
}m_{2,i_{1}i_{2}i_{3}}tr(X^{i_{1}}X^{i_{2}}X^{i_{3}})\right)
\end{equation*}%
defines closed $gl(N)$-equivariant differential form on $(gl(N|N)\otimes \Pi
V)_{0}$. It can be seen as an odd higher dimensional generalisation of the
matrix Airy integral. It's asymptotic expansion is given, as it folows from
theorem 1 of \cite{B06b}), by the sum over trivalent ribbon graphs :%
\begin{equation*}
\exp \left( const\sum_{\Gamma }\hbar ^{-\chi _{\Gamma }}c_{S}(\Gamma
)c_{\Lambda }(\Gamma )\right)
\end{equation*}%
where $c_{\Lambda }\in H^{\ast }(\overline{\mathcal{M}}_{g,n}),$ $c_{S}\in
H_{\ast }(\overline{\mathcal{M}}_{g,n})$ are the cocycle and the cycle on
the stable ribbon graph complex defined for any stable ribbon graph in \cite%
{B06b}, \cite{B09b} and associated with the odd differentiation $I=[\Lambda
_{01},\cdot ]$, $I^{2}\neq 0$, of the associative algebra $gl(N|N)$ and with
the solution $S$ to the noncommutative Batalin-Vilkovisky equation.$\square $
\end{proposition}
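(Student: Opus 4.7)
My plan is to verify the three assertions in succession, using the preceding proposition as the main black box for closedness and theorem~1 of \cite{B06b} for the Feynman-graph expansion.

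First I argue $\Delta m_2 = 0$ by a combinatorial triviality. Recall that, on a single cyclic word, $\Delta$ sums over \emph{non-cyclically-adjacent} pairs of positions and contracts them against $\beta^{\vee}$, as already used in the final paragraph of the proof of Theorem~\ref{th1}. The tensor $m_2$ is a cyclic word of length three; every pair of its three flags is cyclically adjacent, hence the sum defining $\Delta m_2$ is empty.

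To obtain closedness of the equivariant form, I apply the third part of the preceding proposition to $f = \tfrac{1}{3!}\sum (-1)^{\epsilon} m_{2,i_1 i_2 i_3}\,tr(X^{i_1}X^{i_2}X^{i_3}) = Tr(m_2)$. I have to verify
\[
\hbar \Delta f + \tfrac{1}{2}\{f,f\} + I^{\vee} f = 0, \qquad I=[\Xi,\cdot].
\]
The first term vanishes by the previous paragraph. The bracket $\{m_2,m_2\}=0$ is exactly the associativity identity (\ref{mm}). And $I^{\vee}m_2=0$ follows from the Leibniz rule (\ref{Im}) together with the $I$-invariance of $\beta$. Since $f$ carries no positive power of $\hbar$, no further contributions arise, and the proposition delivers the desired closedness.

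For the asymptotic expansion, I split the integrand into the nondegenerate Gaussian $\exp(s_\Lambda/\hbar)$ and the cubic interaction $Tr(m_2)/(3!\hbar)$. The covariance of $s_\Lambda$ on $(gl(N|N)\otimes \Pi V)_0$ is the inverse of the bilinear form $\beta([\Xi,\cdot],\cdot)$, so the Feynman propagator is built from $\beta^{\vee}$ together with the $\Lambda$-parameter carried by the odd matrix $\Xi$. Standard 't~Hooft-style bookkeeping then rewrites the perturbative series as an exponentiated sum over connected trivalent ribbon graphs $\Gamma$ weighted by $\hbar^{-\chi(\Sigma_\Gamma)}$, each vertex supplying a copy of $m_2$, each edge a propagator, and each face of $\Sigma_\Gamma$ a matrix trace loop. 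Theorem~1 of \cite{B06b} identifies this graph-wise contraction, vertex by vertex and edge by edge, with the pairing of the cycle $c_S(\Gamma)\in H_*(\overline{\mathcal{M}}_{g,n})$ attached to the (in this case strict associative) $A_\infty$-structure against the cocycle $c_\Lambda(\Gamma)\in H^*(\overline{\mathcal{M}}_{g,n})$ attached to the odd derivation $I=[\Lambda_{01},\cdot]$. The only nontrivial step is this last cohomological identification, which is not reproved here but quoted directly from \cite{B06b}; the rest is bookkeeping.
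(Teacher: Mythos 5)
Your proof is correct and takes exactly the route the paper intends: the proposition is stated with a closing $\square$ and no written proof, the implicit argument being precisely the one you give, namely part 3 of the preceding proposition for closedness and Theorem 1 of \cite{B06b} for the graph expansion. Your explicit verifications --- $\Delta m_{2}=0$ because every pair of positions in a length-three cyclic word is cyclically adjacent, $\{m_{2},m_{2}\}=0$ as the associativity identity (\ref{mm}), and $I^{\vee }m_{2}=0$ from the Leibnitz rule (\ref{Im}) and $I$-invariance of $\beta $ --- simply supply the details the paper leaves to the reader.
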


\end{document}